\theoremstyle{plain}
\newtheorem{proposition}{Proposition}
\newtheorem{theorem}{Theorem}[section]
\newtheorem{lemma}[theorem]{Lemma}
\theoremstyle{remark}
\newtheorem{definition}[theorem]{Definition}
\newcommand{\dd}{\mathop{}\!\mathrm{d}}
\def\E{\mathbb{E}}
\def\R{\mathbb{R}}
\newcommand{\Acal}{\mathcal{A}}
\newcommand{\Mfrak}{\mathfrak{M}}
\newcommand{\dKL}{d_{\mathrm{KL}}}
\newcommand{\dGAB}{d_{\mathrm{GAB}}}
\newcommand{\eGAB}{\mathcal{E}_{\mathrm{GAB}}}
\newcommand{\bb}[1]{\boldsymbol{#1}}
\newcommand{\ind}[1]{\boldsymbol{1}_{#1} }
\newcommand{\norm}[1]{\left\Vert #1 \right\Vert}
\newcommand{\inner}[1]{\left\langle #1\right\rangle}
\newcommand{\tr}{^{\intercal}}
\DeclareMathOperator*{\argmin}{arg\,min}
\title{Characterization of Generalized Alpha-Beta Divergences and Associated Entropy Measures}
\date{}
\author[1]{Subhrajyoty Roy}
\affil[1]{Washington University in St. Louis, St. Louis, USA}
\author[2]{Supratik Basu}
\affil[2]{Duke University, Durham, USA}
\author[3]{Abhik Ghosh}
\author[3]{Ayanendranath Basu}
\affil[3]{Indian Statistical Institute, Kolkata, India}
\begin{document}

\maketitle

\begin{abstract}
Minimum divergence estimators provide a natural framework for robust (parametric) statistical inference. Useful properties of several such divergence measures, including, the Hellinger distance, the power divergence, the density power divergence, the logarithmic density power divergence, etc., have been established in the literature; many of them lead to estimators with high statistical efficiency, sometimes even full asymptotic efficiency. The notable success of these divergences as tools of parametric inference motivates us to explore possible extensions of the alpha-beta divergence family, leading to a superfamily of divergence measures called the ``generalized alpha-beta (GAB) divergences''. This family contains all the aforementioned popular divergence measures as special cases, and additionally provides opportunities to discover new and novel classes of divergences that generate estimators having strong robustness properties without allowing a significant drop in statistical efficiency in various applications. In this paper, we provide the necessary and sufficient conditions for the validity of these generalized divergence measures that enable us to employ them for improved statistical inference. We also show various characterizing properties like duality, inversion, semi-continuity, etc., for the general class of GAB divergences. A discussion on the entropy measure derived from this general family and its properties are also presented  along with the associated maximum entropy principle. The class of GAB divergences provide a delicate balance between local and global robustness, and this is illustrated by two examples of robust parameter estimation under the Geometric and the normal scale models.
\end{abstract}

\emph{\textbf{Keywords:} Alpha-beta divergence; logarithmic S-divergence; Bridge divergence, Entropy.}

\section{Introduction}\label{sec:intro}

Information-theoretic divergence measures play a vital role in many areas of statistical inference, supervised and unsupervised machine learning, pattern recognition, signal analysis, and more. Due to their simplicity and natural interpretability, minimum divergence estimators, which minimize the amount of ``separation" between the true data-generating distribution and the model family of distributions, represent a class of popular tools in parametric statistical estimation~\citep{basu2011statistical}. In addition, their strong robustness properties have allowed them to proliferate into modern applications beyond classical statistical inference, including supervised learning~\citep{ghosh2026provably}, boosting algorithms~\citep{takenouchi2004robustifying}, factor analysis~\citep{roy2024robustpca, roy2024robust}, clustering~\citep{chakraborty2023robust}, generative models~\citep{nowozin2016fGAN}, differential privacy~\citep{asoodeh2021local}, reinforcement learning~\citep{li2025choice}, and many more.

Within this realm, certain classes of density-based divergences stand out due to their usefulness in statistical inference, which include the squared Euclidean distance (also referred to as $L^2$ distance) and the Cressie-Read power divergence family~\citep{cressie1984multinomial} covering many notable divergences such as the Kullback-Leibler (KL) divergence, Pearson's chi-square, Neyman's chi-square, Hellinger distance, and several other chi-square type divergences. \cite{basu1998robust} further introduced the density power divergence (DPD) family (also noted as the family of Beta divergences~\citep{cichocki2010families}), which acts as a bridge between the Kullback-Leibler divergence and the $L^2$ distance. Generalizing the above, \cite{ghosh2017generalized} proposed the super divergence (S-divergence) family, which connects the power divergence family with the DPD family. A different generalization starting from the logarithmic density power divergence (LDPD) family of~\cite{jones2001comparison} (also referred to as the family of Gamma divergences by~\cite{fujisawa2008robust}) was proposed by~\cite{maji2014logarithmic}, resulting in the logarithmic super-divergence (LSD) class. Starting with the Alpha divergence~\citep{chernoff1952measure}, \cite{cichocki2011generalized} produced the Alpha-Beta (AB) divergence family, which is closely related to the $S$-divergence family. In another direction, \cite{jones2001comparison} also proposed an alternative divergence family, which provides a smooth bridge between the DPD family and the LDPD family based on a hyperparameter $\phi$, leading to a two-parameter family of $(\phi,\gamma)$-divergences.

Many authors have noted the benefits of robustness by suitably modifying the contribution of the model density -- either through power or logarithmic transformations -- while preserving desired statistical properties; e.g., see~\citep{basu1998robust, basu2011statistical, cichocki2010families, ghosh2013robust,ghosh2017generalized, roy2024robust}. Thus, it is both natural and compelling to investigate whether other useful divergence measures can be systematically generated by applying more general transformations. Such an approach not only provides a unifying perspective on several existing divergence families but also opens up pathways to designing novel divergences with tailored robustness-efficiency trade-offs and desirable analytical properties.

An initial attempt to realize this vision was conceptualized by~\cite[Eq. (1)]{maji2014logarithmic}, but the author did not provide any characterization of the corresponding divergence family. In fact, it was not even known if the form proposed there constitutes a valid statistical divergence, i.e., is nonnegative for all possible pairs of densities. Building on this preliminary insight, our work attempts to understand if the existing choices of divergences can be refined for simultaneous gain in robustness and efficiency, or whether the current formulations already represent a fundamental limit. In particular, following some introductory discussions existing divergences in Section~\ref{sec:prelim} and on generalized alpha-beta (GAB) divergence family in Section~\ref{sec:gsd-alpha-beta}, we establish the necessary and sufficient conditions for the nonnegativity of the GAB divergence for various hyperparameter choices through Theorems~\ref{thm:gab-div-alpha-plus-beta-1}-\ref{thm:gab-div-nec-suff-case4} in Section~\ref{sec:gab-analysis}. In Section~\ref{sec:theory-gab-entropy}, we establish various interesting properties of the associated GAB entropy, including the maximum entropy principle, a much sought property to ensure valid optimization of the relative entropy. Section~\ref{sec:robust-parametric-inference} demonstrates the applicability of these ideas for robust statistical inference, and shows how GAB divergence achieves a balance between local and global robustness through the modification of the curvature of its generating function. We then present a brief illustrative study to highlight these benefits in a discrete setup (geometric distribution family) and in a continuous setup (normal scale estimation problem), with a discussion for future scopes of research following in Section~\ref{sec:conclusion}.

Owing to the breadth of possible statistical consequences and related theoretical questions, it is necessary to approach our problem in a systematic manner.  In order to keep a clear focus in our explorations, the emphasis in this paper is on deriving the necessary and sufficient conditions under which the proposed form of GAB divergences yields a valid statistical divergence. In a companion paper~\citep{roy2026gaboptimal}, we study the asymptotic and robustness properties of the associated minimum divergence estimators, thereby addressing the broader question of identifying optimal choices under the robustness-efficiency trade-off. For brevity of presentation, the technical proofs are deferred till the Appendix.

\section{Preliminaries}\label{sec:prelim}

\subsection{Notation}\label{sec:notation}

We first list all major notations that are used throughout the paper to aid the reader. Let $\Acal$ be the alphabet set, $P$ and $Q$ be two generic sub-probability measures dominated by a common measure $\mu$ on $\Acal$, with sub-density functions $p$ and $q$ (i.e., the Radon-Nikodym derivatives) respectively. The sets of probability measures and the sub-probability measures are denoted by $\Mfrak_{1}(\Acal)$ and $\Mfrak_{\leq 1}(\Acal)$ respectively; note that $\Mfrak_{1}(\Acal)$ is a subset of $\Mfrak_{\leq 1}(\Acal)$. When $\Acal = \{a_1, \dots, a_n\}$ is finite, we use the shorthand $\Mfrak_{1,n}$ and $\Mfrak_{\leq 1, n}$ instead. For $P \in \Mfrak_{\leq 1}(\Acal)$ with sub-density $p$, $\norm{p}_\alpha = (\int_{\Acal} p^\alpha(a)d\mu(a))^{1/\alpha}$ indicates the usual $L_\alpha$-norm, for any $\alpha \neq 0$, provided the integral exists and is finite. If the integral does not converge, we assume $\norm{p}_\alpha = \infty$ by convention. In particular, for finite $\Acal$, this integral always exists and is finite. In the same spirit, we define a $(\alpha,\beta)$-inner product between $P$ and $Q$ with respective sub-densities $p$ and $q$ as
\begin{equation*}
    \inner{p, q}_{\alpha,\beta} = \int_{\Acal} p^\alpha(a) q^\beta(a) \dd\mu(a),
\end{equation*}
\noindent for all $\alpha,\beta \in \R$. Note that this is not a proper inner product in the mathematical sense since it does not satisfy the commutative property. For any $\alpha \neq 0$ and $P \in \Mfrak_{\leq 1}(\Acal)$, let $P^\alpha$ denote the sub-probability distribution corresponding to the sub-density function $p^\alpha$, provided that $\int_{\Acal} p^\alpha(a) d\mu(a) \leq 1$. These are also denoted as the unnormalized $\alpha$-escort distribution of $P$~\citep{cichocki2010families}. It is worthwhile to mention that the bound $1$ is chosen here to ensure that the resulting construction yields a proper sub-probability distribution, but any finite bound works as well after proper scaling without affecting the subsequent discussion. On the other hand, let $P^{[\alpha]}$ and $p^{[\alpha]}$ denote the normalized $\alpha$-escorted versions of the distribution and the density, i.e., $p^{[\alpha]} = p^\alpha/\norm{p}_\alpha^\alpha$~\citep{bercher2010escort, kalogeropoulos2012escort}. For a set $A$ and integer $k \geq 1$, we use the standard notation $C^k(A)$ to denote the class of functions on $A$ which are $k$-times continuously differentiable. Extending this notation, we denote the set of all continuous functions on $A$ as $C^0(A)$. The notation $\ind{A}$ denotes the indicator function of the set $A$, i.e., $\ind{A}(x) = 1$ if $x \in A$ and $0$ otherwise.

\subsection{Existing divergence measures}\label{sec:existing-divergences}

We briefly review a few of the important statistical divergence measures to set up a context for the proposed generalized divergence family. Given $P, Q \in \Mfrak_{\leq 1}(\Acal)$, \cite{cichocki2011generalized} define the Alpha-Beta (AB) divergence as
\begin{equation}
    d_{\mathrm{AB}}^{(\alpha,\beta)}(P, Q)
    = \dfrac{1}{\alpha\beta}\left( \lambda_\alpha \norm{p}_{\alpha+\beta}^{\alpha+\beta} + \lambda_\beta \norm{q}_{\alpha+\beta}^{\alpha+\beta} - \inner{p,q}_{\alpha,\beta} \right)
    \label{eqn:defn-ab-div}
\end{equation}
\noindent where $\lambda_\alpha = \alpha/(\alpha+\beta)$ and $\lambda_\beta = (1 - \lambda_\alpha)$, if $\alpha\beta(\alpha+\beta)\neq 0$. However, if $\alpha\beta(\alpha+\beta) = 0$, the form of AB divergence is defined as the appropriate limits of the form given in~\eqref{eqn:defn-ab-div}. With the restriction when both $\alpha$ and $\beta$ are nonzero, and $(\alpha + \beta) = (1 + \tau)$ for some $\tau \geq 0$, a simple reorganization of the terms of~\eqref{eqn:defn-ab-div} makes the AB divergence equivalent to the S-divergence introduced by~\cite{ghosh2017generalized}, because of the relation,
\begin{equation*}
    d_{\mathrm{S}}^{(\tau,\lambda)}(P, Q)
    = \frac{1}{\tilde{\alpha}} \norm{f}_{1+\tau}^{1+\tau} - \frac{1+\tau}{\tilde{\alpha}\tilde{\beta}}\inner{f,g}_{\tilde{\beta}, \tilde{\alpha}} + \frac{1}{\tilde{\beta}}\norm{g}_{1+\tau}^{1+\tau} = (1+\tau)d_{AB}^{(\tilde{\beta}, \tilde{\alpha})}(P, Q),
\end{equation*}
\noindent where $\tilde{\alpha} = 1 + \lambda(1-\tau)$ and $\tilde{\beta} = \tau - \lambda(1-\tau)$.
\noindent In another direction, taking $\beta = (1 - \alpha)$ reduces the AB divergence to the Alpha divergence family~\citep{chernoff1952measure} (also known as the Cressie Read power divergence family~\citep{cressie1984multinomial}) given by,
\begin{equation}
    d_{\mathrm{A}}^{(\alpha)}(P, Q)
    = \begin{cases}
        \frac{1}{1-\alpha} \norm{p}_1 + \frac{1}{\alpha}\norm{q}_1 - \frac{1}{\alpha(1-\alpha)} \inner{p,q}_{\alpha,1-\alpha}, & \text{if } \alpha \notin \{ 0, 1 \}, \\
        \int p\ln(p/q) - \norm{p}_1 + \norm{q}_1,                                                                              & \text{if } \alpha = 1,               \\
        \int q\ln(q/p) - \norm{q}_1 + \norm{p}_1,                                                                              & \text{if } \alpha = 0,
    \end{cases}
    \label{eqn:defn-a-div}
\end{equation}
\noindent which is also a special case of the S-divergence with $\tau = 0$. On the other hand, when $\alpha = 1$, the alpha-beta divergence reduces to the Beta divergence, defined as
\begin{equation}
    d_{\mathrm{B}}^{(\beta)}(P, Q)
    = \begin{cases}
        \frac{1}{\beta(1+\beta)}\norm{p}_{1+\beta}^{1+\beta} - \frac{1}{\beta}\inner{p, q}_{1,\beta} + \frac{1}{1+\beta}\norm{q}_{1+\beta}^{1+\beta}, & \text{if } \beta \notin \{ -1, 0 \} \\
        \int p\ln(p/q) - \norm{p}_1 + \norm{q}_1,                                                                                                     & \text{if } \beta = 0,               \\
        \int \ln(q/p) + \int p/q - 1,                                                                                                                 & \text{if } \beta = -1,
    \end{cases}
    \label{eqn:defn-b-div}
\end{equation}
\noindent The class of Beta divergence contains a scaled version of DPD family~\citep{basu1998robust} as a special case with $\beta > 0$. We present the above definitions to include sub-probability distributions; the restrictions to probability distributions lead to further simplifications of the aforementioned forms.

Generalizing the Alpha-Beta (AB) divergence with a logarithmic transformation, \cite{cichocki2011generalized} indicated a new family of divergences
\begin{equation}
    d_{\mathrm{AC}}^{(\alpha,\beta)}(P, Q)
    = \frac{1}{\beta(\alpha+\beta)} \ln\left( \norm{p}_{\alpha+\beta}^{\alpha+\beta} \right)
    + \frac{1}{\alpha(\alpha+\beta)} \ln\left( \norm{q}_{\alpha+\beta}^{\alpha+\beta} \right)
    - \frac{1}{\alpha\beta} \ln\left( \inner{p,q}_{\alpha,\beta} \right),
    \label{eqn:defn-ac-div}
\end{equation}
\noindent for $\alpha \beta (\alpha+\beta) \neq 0$. With $\alpha = 1$ and $\beta = (\gamma -1)$, it leads to the Gamma divergence family~\citep{cichocki2010families}. In a separate attempt, \cite{maji2014logarithmic,maji2016logarithmic} also considered the same logarithmic transformation starting with the S-divergence (SD) family, which resulted in the LSD family.

Additional transformations beyond logarithms have also been considered in the literature. \cite{jones2001comparison} proposed a class of divergences which we refer to as the J2 divergence, and it is given by
\begin{equation*}
    d_{\mathrm{J2}}^{(\phi, \gamma)}(P, Q) = \frac{1}{\phi}\norm{p}_{1+\gamma}^{(1+\gamma)\phi} + \frac{1}{\gamma\phi}\norm{q}_{1+\gamma}^{(1+\gamma)\phi} - \frac{1+\gamma}{\gamma\phi} \inner{p,q}_{\gamma,1}^\phi,
\end{equation*}
\noindent for $\gamma \in [0, 1]$ and $\phi > 0$. \cite{kuchibhotla2019statistical} and later~\cite{gayen2024unified} suggested the bridge divergence family to avoid scaling issues with logarithms, resulting in a divergence of the form
\begin{equation*}
    d_{\mathrm{Bridge}}^{(\alpha)}(P, Q) = \ln(\lambda + \bar{\lambda} \norm{p}_{1+\alpha}^{1+\alpha}) + \frac{1}{\alpha} \ln(\lambda + \bar{\lambda} \norm{q}_{1+\alpha}^{1+\alpha}) - \frac{1+\alpha}{\alpha} \ln(\lambda + \bar{\lambda} \inner{p,q}_{\alpha,1}),
\end{equation*}
\noindent for some $\lambda = 1 - \bar{\lambda} \in [0, 1]$. All these transformations motivate the development of Generalized Alpha-Beta divergence, as a means of controlling the geometry of the relevant dual spaces~\citep{amari2010information}, which is discussed in the following section.

\section{Generalized Alpha-Beta divergence \& Entropy}\label{sec:gsd-alpha-beta}

When $\alpha, \beta, (\alpha+\beta) \neq 0$, aforementioned statistical divergence measures discussed in Section~\ref{sec:existing-divergences} have a similar form, two terms containing the $(\alpha+\beta)$-th norm of the densities $p$ and $q$, and a single term containing the $(\alpha,\beta)$-inner product between $p$ and $q$. Motivated by such a structure and their logarithmic transform, \cite{maji2019contributions} conceptualized a generalized class of divergences considering general transformations beyond logarithm, but without any discussion on the relevant characterization properties. We formalize this general divergence family below, allowing $\alpha$ and $\beta$ to take any real value.

\subsection{Definition of GAB divergences family}\label{sec:gab-defn}

Consider two hyperparameters $\alpha, \beta \in \R$ and let $\psi: [0, \infty] \rightarrow \R$ be a suitable transformation. We define the Generalized Alpha-Beta (GAB) divergence between any two sub-probability distributions $P$ and $Q$ as
\begin{equation}
    \dGAB^{(\alpha,\beta),\psi}(P, Q)
    = \dfrac{1}{\beta(\alpha+\beta)} \psi\left( \norm{p}_{\alpha+\beta}^{\alpha+\beta} \right)
    + \dfrac{1}{\alpha(\alpha+\beta)} \psi\left( \norm{q}_{\alpha+\beta}^{\alpha+\beta} \right)
    - \dfrac{1}{\alpha\beta} \psi\left( \inner{p,q}_{\alpha,\beta} \right),
    \label{eqn:defn-gab-div}
\end{equation}
\noindent when $\alpha, \beta, (\alpha+\beta)$ are all nonzero. If $\psi \in C^1([0,\infty))$, then one can define the form of Generalized Alpha-Beta (GAB) divergence for the edge cases, i.e., when $\alpha = 0, \beta = 0$ or $\alpha + \beta = 0$, by taking the corresponding limits of the GAB divergence, leading to the forms:
\begin{equation}
    \dGAB^{(\alpha,\beta),\psi}(P, Q)
    = \begin{cases}
        \alpha^{-2}\left[ \psi'(\norm{p}_{\alpha}^{\alpha}) \dKL(P^\alpha,Q^\alpha) - \psi(\norm{p}_\alpha^\alpha) + \psi(\norm{q}_\alpha^\alpha)\right], & \text{if } \alpha \neq 0, \beta = 0, \\
        \beta^{-2} \left[\psi'(\norm{q}_{\beta}^{\beta}) \dKL(Q^\beta, P^\beta) - \psi(\norm{q}_\beta^\beta) + \psi(\norm{p}_\beta^\beta)\right],         & \text{if } \alpha = 0, \beta \neq 0, \\
        \alpha^{-2}\left[ \psi'(1) \int \ln(q^\alpha/p^\alpha) + \psi\left( \norm{p/q}_\alpha^{\alpha} \right) - \psi(1) \right],                         & \text{if } \alpha = -\beta \neq 0,   \\
        \frac{\psi'(1)}{2}\int (\ln p - \ln q)^2,                                                                                                         & \text{if } \alpha = \beta = 0.
    \end{cases}
    \label{eqn:defn-gab-div-edge}
\end{equation}
\noindent where
\begin{equation}
    \dKL(P, Q) = \int p\ln \left( p/q \right) d\mu,
    \label{eqn:d-kl-1}
\end{equation}
\noindent denotes the Kullback-Leibler (KL) divergence between any two nonnegative measures $P$ and $Q$ with densities $p$ and $q$ respectively. In Eq.~\eqref{eqn:defn-gab-div-edge}, the quantity $\dKL(P^\alpha, Q^\alpha)$ thus indicates the KL divergence between the unnormalized $\alpha$-escorted versions of $P$ and $Q$. It is quite clear that different choices of $\psi(x)$ lead to the special cases described in Section~\ref{sec:existing-divergences}, e.g., the identity function leads to the AB divergence, $\psi(x) = \ln(x)$ leads to the logarithmic AB divergence, power transformation leads to the J2 divergence, etc. Table~\ref{tab:gab-divergence-families} demonstrates these connections in an explicit manner. In general, some assumptions are necessary for this characterizing transformation $\psi$ to produce a well-defined statistical divergence measure, and these conditions are described later in detail in Section~\ref{sec:gab-char}.

\begin{table*}[!t]
    \caption{Different families of statistical divergence that arise as special cases of the Generalized Alpha-Beta divergence family, up to a multiplicative constant. (~$^\ast$An unscaled version of $\gamma$-divergence was developed by~\cite{jones2001comparison} and is called the logarithmic density power divergence.)}
    \label{tab:gab-divergence-families}
    \centering
    \resizebox{\linewidth}{!}{\begin{tabular}{llll}
            \toprule
            \textbf{Divergence Family}                                                                & \textbf{Form of the divergence}                                                                   & \textbf{$\psi(x)$}                                            & \textbf{Hyperparameters}                                      \\
            \midrule
            \makecell[l]{Alpha-Beta divergence \\ \citep{cichocki2011generalized}}                                                               &
            $\begin{aligned}
                     & (\beta(\alpha+\beta))^{-1} \norm{p}^{\alpha+\beta}_{\alpha+\beta} - (\alpha\beta)^{-1}\inner{p, q}_{\alpha,\beta} \\[-0.2ex]
                     & \qquad \qquad \qquad \qquad + (\alpha(\alpha+\beta))^{-1} \norm{q}_{\alpha+\beta}^{\alpha+\beta}
                \end{aligned}$ & $x$ & $\alpha, \beta \in \R \setminus \{0\}, (\alpha +\beta)\neq 0$                                                                 \\
            \makecell[l]{Density Power divergence \\ \citep{basu1998robust}}                                                                 & $\norm{p}^{1+\alpha}_{1+\alpha} - \left(1 + \alpha^{-1} \right)\inner{p,q}_{\alpha,1} + \alpha^{-1} \norm{q}^{1+\alpha}_{1+\alpha}$                                             & $x$                                                           & $\alpha \geq 0, \beta = 1$                                    \\
            \makecell[l]{Power divergence \\ \citep{cressie1984multinomial}}                                                                 & $[\lambda(\lambda - 1)]^{-1}\left[ \int p^\lambda q^{1-\lambda} - \lambda \int p + (\lambda - 1)\int q \right]$                                                                 & $x$                                                           & $\alpha = \lambda, \beta = (1-\lambda)$                       \\
            \makecell[l]{S-divergence \\ \citep{ghosh2017generalized}}                                                                       & $\frac{1}{A}\norm{p}_{1+a}^{1+a} - \frac{1+a}{AB}\inner{p,q}_{B,A} + \frac{1}{B} \norm{q}_{1+a}^{1+a}$                                                                          & $x$                                                           & $\alpha = B, \beta = A = (1+a - \alpha)$                      \\
            \midrule
            \makecell[l]{AC divergence \\ \citep{cichocki2010families}}                                                                      & $\ln\left[ \norm{p}_{\alpha+\beta}^{\beta} \norm{q}_{\alpha+\beta}^\alpha \right] - \ln\left[\inner{p, q}_{\alpha,\beta}^{1/\alpha\beta} \right]$                               & $\ln(x)$                                                      & $\alpha, \beta \in \R \setminus \{0\}, (\alpha +\beta)\neq 0$ \\
            \makecell[l]{Logarithmic S-divergence \\ \citep{maji2014logarithmic}}                                                            & $\ln\left[ \norm{p}_{1+a}^{(1+a)/A} \norm{q}_{1+a}^{(1+a)/B} \right] - \ln\left[\inner{p, q}_{B,A}^{(1+a)/AB} \right]$                                                          & $\ln(x)$                                                      & $\alpha = B, \beta = A = (1+a - \alpha)$                      \\
            \makecell[l]{$\gamma$-divergence$^\ast$ \\ \citep{cichocki2010families}}                                                                & $\frac{1}{\gamma(\gamma - 1)}\ln\left[ \dfrac{\int p^\gamma q^{1-\gamma}}{(\int p)^\gamma (\int q)^{1 - \gamma}} \right]$                                                       & $\ln(x)$                                                      & $\alpha = \gamma, \beta = (1-\gamma)$                         \\
            \midrule
            \makecell[l]{J2 divergence \\ \citep{jones2001comparison}}                                                                     & $\phi^{-1}\left( \int p^{1+\gamma} \right)^\phi - \frac{1+\gamma}{\gamma \phi}\left( \int p^\gamma q \right)^\phi + \frac{1}{\gamma \phi}\left( \int q^{1+\gamma} \right)^\phi$ & $\phi^{-1} x^\phi$                                            & $\alpha = \gamma, \beta = 1$                                  \\
            \midrule                                         \\[-2ex]
            \makecell[l]{Bridge divergence \\ \citep{kuchibhotla2019statistical}}                                                            &
            $\begin{aligned}
                     & \ln(\lambda + \bar{\lambda} \norm{p}_{1+\alpha}^{1+\alpha})                              \\[-0.2ex]
                     & \qquad \qquad + \alpha^{-1} \ln(\lambda + \bar{\lambda} \norm{q}_{1+\alpha}^{1+\alpha})    \\[-0.2ex]
                     & \qquad \qquad - (1+\alpha)\alpha^{-1} \ln(\lambda + \bar{\lambda} \inner{p, q}_{\alpha,1}) \\[0.5ex]
                \end{aligned}$ & $\ln(\lambda + \bar{\lambda}x)$                                                                                                                                                 & $\alpha \geq 0, \beta = 1, \bar{\lambda} = 1 - \lambda$                                                                                                                                                                                     \\
            \bottomrule
        \end{tabular}}
\end{table*}

\subsection{Connection to other generalization attempts}\label{sec:other-generalization}

Before proceeding further to analyze the GAB divergence form given in Eq.~\eqref{eqn:defn-gab-div}-\eqref{eqn:defn-gab-div-edge}, we present some insights and intuitive understanding of the proposed divergence. Consider the special case when exactly one of $\alpha$ and $\beta$ is equal to $0$. Noting the following equality
\begin{equation*}
    \dKL(P^\alpha, Q^\alpha) = \norm{p}_\alpha^\alpha \left[ \dKL(P^{[\alpha]}, Q^{[\alpha]}) + \ln(\norm{p}_\alpha^\alpha) - \ln(\norm{q}_\alpha^\alpha) \right],
\end{equation*}
\noindent we can rewrite the GAB divergence for $\alpha \neq 0, \beta = 0$ case as
\begin{multline}
    \alpha^2 \dGAB^{(\alpha,0),\psi}(P, Q)
    = \psi'(\norm{p}_\alpha^\alpha) \norm{p}_\alpha^\alpha \left( \dKL(P^{[\alpha]}, Q^{[\alpha]}) + \ln(\norm{p}_\alpha^\alpha) - \ln(\norm{q}_\alpha^\alpha) \right)\\
    - \psi(\norm{p}_\alpha^\alpha) + \psi(\norm{q}_\alpha^\alpha).
    \label{eqn:gab-to-kl-bzero}
\end{multline}
\noindent Similarly, for $\alpha = 0, \beta \neq 0$ case, we get
\begin{multline}
    \beta^2 \dGAB^{(0,\beta),\psi}(P, Q)
    = \psi'(\norm{q}_\beta^\beta) \norm{q}_\beta^\beta \left( \dKL(Q^{[\beta]}, P^{[\beta]}) + \ln(\norm{q}_\beta^\beta) - \ln(\norm{p}_\beta^\beta) \right)\\
    - \psi(\norm{q}_\beta^\beta) + \psi(\norm{p}_\beta^\beta).
    \label{eqn:gab-to-kl-azero}
\end{multline}
\noindent This connection demonstrates that the GAB divergence for $\alpha \neq 0, \beta = 0$ or $\alpha = 0, \beta \neq 0$ can be visualized as a localized affine transformation of the KL divergence between the $\alpha$-escorted versions of the corresponding densities. Focusing on the case when $\alpha \neq 0, \beta = 0$, one can view GAB divergence in a more general form
\begin{equation*}
    \psi(\norm{q}_{\alpha}^{\alpha}) - \psi(\norm{p}_{\alpha}^{\alpha}) + \psi'(\norm{p}_\alpha^\alpha) d^\ast(P^\alpha, Q^\alpha),
\end{equation*}
\noindent where $d^\ast(P^\alpha, Q^\alpha)$ is a discrepancy measure with the property that $d^\ast(P^\alpha, P^\alpha) = 0$. This general form also unifies the existing generalization attempts such as the unified Bregman representation of~\citep{roy2019density} which uses $d^\ast(P, Q) = \int (p - q)d\mu$ and the Norm-based Bregman density power divergence of~\citep{kobayashi2025unified} which arises from $d^\ast(P, Q) = \int (p/q - 1)p^\gamma d\mu$.

\subsection{Generalized Alpha-Beta Entropy}\label{sec:gab-entropy}

An important concept related to a statistical divergence measure is its associated entropy. Entropy measures the amount of ``surprise'' present in a probability distribution $P$, serving as a fundamental metric for uncertainty. Beyond its foundational role in source coding and optimal code-length determination within information theory, the concept is central to characterizing macroscopic disorder in statistical thermodynamics, quantifying predictive uncertainty in machine learning, tracking structural degradation in software engineering, and evaluating predictability within dynamical systems and ergodic theory, etc. Starting from a statistical divergence measure, the associated entropy can be obtained by rewriting the negative divergence of any probability measure $P$ to a uniform measure over the alphabet set $\mathcal{A}$, denoted as $Q_0$~\citep{cichocki2010families}. Since such a uniform measure may not exist when the alphabet set $\Acal$ is infinite, for all discussions related to the entropy, we restrict our attention to a finite $\Acal$ with $n$ elements. Note that, for the proposed GAB divergence, the second term in $\dGAB^{(\alpha,\beta),\psi}(P, Q_0)$ is free of $P$. As a result, we may define the corresponding entropy measure as
\begin{equation}
    \eGAB^{(\alpha,\beta), \psi}(P) := -\dfrac{1}{\beta} \left[ \frac{\psi\left( \norm{p}_{\alpha+\beta}^{\alpha+\beta} \right)}{\alpha+\beta} - \frac{\psi\left( \norm{p}_{\alpha}^{\alpha} \right)}{\alpha} \right],
    \label{eqn:gab-entropy}
\end{equation}
\noindent for $\alpha, \beta, (\alpha+\beta) \neq 0$. If $\psi(x) = \ln(x)$, then Eq.~\eqref{eqn:gab-entropy} reduces to the logarithmic $(\alpha,\alpha+\beta)$-norm entropy $\mathcal{E}_{LN}$ up to a multiplicative constant as expressed in the following relation
\begin{equation*}
    \eGAB^{(\alpha,\beta), \ln}(P)
    = \dfrac{1}{\alpha - (\alpha+\beta)}\left[ \ln\left( \norm{p}_{\alpha+\beta} \right) - \ln\left( \norm{p}_{\alpha} \right) \right]
    = \dfrac{1}{\alpha(\alpha+\beta)}\mathcal{E}_{LN}^{(\alpha,\alpha+\beta)}(P).
\end{equation*}
\noindent The logarithmic norm entropy is a generalization of the class of R\'{e}nyi entropy (hence also the Shannon entropy) that satisfies an important scale-invariance property~\citep{ghosh2021scale}. Note that, the Generalized Alpha-Beta Entropy (GABE) defined in Eq.~\eqref{eqn:gab-entropy} can be trivially extended to encompass the class of sub-probability distributions $\Mfrak_{\leq 1, n}$ as well.

For the edge cases when either $\alpha, \beta$ or $(\alpha+\beta)$ is equal to $0$, the form of GABE reduces to
\begin{equation*}
    \eGAB^{(\alpha,\beta), \psi}(P)
    = \begin{cases}
        \alpha^{-2}\psi\left( \norm{p}_\alpha^\alpha \right) - \alpha^{-1}\psi'\left( \norm{p}_\alpha^\alpha \right)\int p^\alpha \ln(p), & \text{if } \alpha \neq 0, \beta = 0, \\
        \beta^{-1} \psi'(1) \int \ln(p) - \beta^{-2}\psi\left( \norm{p}_\beta^\beta \right),                                              & \text{if } \alpha = 0, \beta \neq 0, \\
        \alpha^{-1}\psi'(1) \int \ln(p) - \alpha^{-2}\psi\left( \norm{p}_\alpha^\alpha \right),                                           & \text{if } \alpha = -\beta \neq 0,   \\
        -\frac{\psi'(1)}{2} \int (\ln(p))^2,                                                                                              & \text{if } \alpha = \beta = 0,
    \end{cases}
\end{equation*}
\noindent which can be easily verified by taking appropriate limits in Eq.~\eqref{eqn:gab-entropy}.

\begin{figure*}[t]
    \centering
    \includegraphics[width=\linewidth]{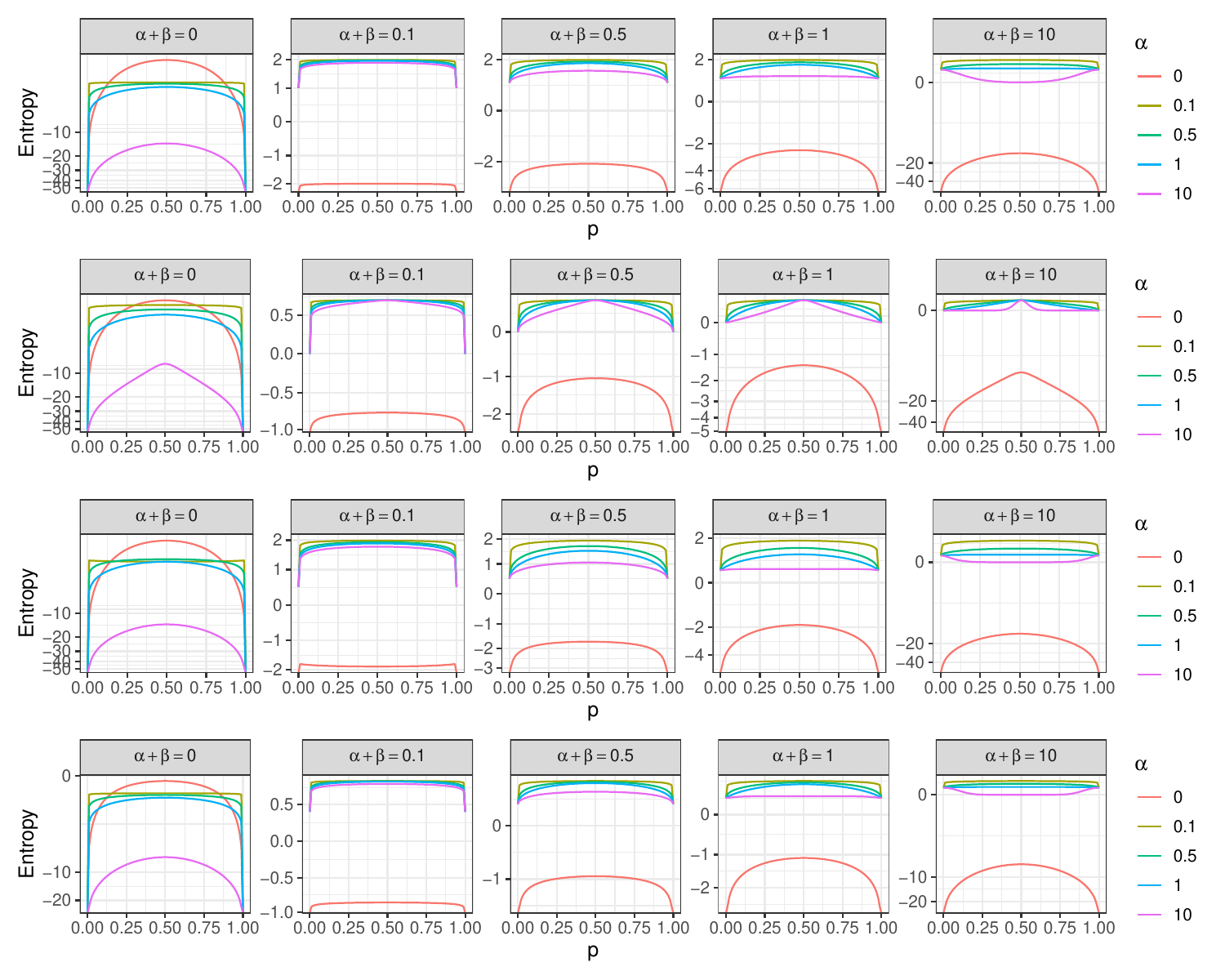}
    \caption{Scaled Generalized Alpha-Beta Entropy for Bernoulli distribution with different choices of generating functions; $\psi(x) = x$, $\psi(x) = \ln(x)$, $\psi(x) = x^2/2$ and $\psi(x) = \ln(x)\Phi(\ln(x)) + \phi(x)$ from top to bottom.}
    \label{fig:gab-entropy-bernoulli}
\end{figure*}

To further understand this measure as a generalized entropy, let us consider a Bernoulli distribution with alphabet $\Acal = \{0, 1\}$ and probability $p(1) = p$, so that $p(0) = (1-p)$. In this case, the GABE for the case $\alpha,\beta, (\alpha+\beta) \neq 0$ reduces to
\begin{equation*}
    \eGAB^{(\alpha,\beta), \psi}(\text{Ber}(p))
    = \frac{\psi(p^\alpha + (1-p)^\alpha)}{\alpha\beta} - \frac{\psi(p^{\alpha+\beta} + (1-p)^{\alpha+\beta})}{(\alpha+\beta)\beta}.
\end{equation*}
\noindent When either $p = 0$ or $p = 1$, the GABE becomes $\psi(1)/\alpha(\alpha+\beta)$, which is the minimum possible value of the entropy in this particular case. Note that, this minimum value depends on the choice of the hyperparameters $\alpha$ and $\beta$. Hence, the entropy measures arising from different choices of these hyperparameters are not directly comparable. Therefore, one may consider scaling the proposed measure given in Eq.~\eqref{eqn:gab-entropy} by multiplying it with $\alpha(\alpha+\beta)$ and treat the resulting quantity as the measure of entropy for the specific case of Bernoulli distribution. In Figure~\ref{fig:gab-entropy-bernoulli}, we show these scaled measures of entropy as functions of the probability $p$, for different combinations of hyperparameters $\alpha$ and $\beta$. We also show different choices of the generating functions that result in different shapes of the resulting entropy measure. As the plots show, except for a few choices of the hyperparameters, the entropy is maximized at $p = 1/2$ in almost all cases. These choices of the hyperparameters are connected to the concavity of the entropy measure, and will be developed later in detail in Theorem~\ref{thm:gab-entropy-concave}.

\section{Theoretical analysis of GAB divergence}\label{sec:gab-analysis}

\subsection{Characterization of the GAB divergence}\label{sec:gab-char}

It is not obvious from the forms given in Eq.~\eqref{eqn:defn-gab-div} to understand the nature of the generating function $\psi$, which ensures GAB divergence to be a well-defined statistical divergence measure. A statistical divergence must satisfy the nonnegativity property for all choices of its arguments. Therefore, we wish to find out the necessary and sufficient conditions on the generating function $\psi$ for which,
\begin{equation*}
    \dGAB^{(\alpha, \beta),\psi}(P, Q) \geq 0,
\end{equation*}
\noindent for all sub-probability distributions $P, Q \in \Mfrak_{\leq 1}(\Acal)$, and equality holds if and only if $P = Q \in \Mfrak_{1}(\Acal)$. It is easy to verify that for any $P \in \Mfrak_{\leq 1}(\Acal)$, $\dGAB^{(\alpha,\beta),\psi}(P, P) = 0$, which follows directly from Eq.~\eqref{eqn:defn-gab-div}. Additionally, if both $\alpha, \beta > 0$, then by a direct consequence of H\"{o}lder's inequality and Jensen's inequality, one can verify that if $\psi$ is monotonically increasing and convex, the form given in Eq.~\eqref{eqn:defn-gab-div} is nonnegative for all pairs of $P$ and $Q$. However, this condition does not characterize the entire class of generating functions $\psi$ for which the resulting GAB divergence is a valid divergence. For example, the LSD is known to be a valid divergence measure~\citep{ghosh2017generalized}, despite having a non-convex generating function $\ln(x)$.

To completely characterize the class of valid generating functions, we follow the innovative approach of~\cite{ray2023functional}, generalizing their results from the context of functional density power divergence (FDPD) class to a significantly larger class of divergence measures defined for all $\alpha, \beta \in \R$. Also, because the limiting forms of GAB given in Eq.~\eqref{eqn:defn-gab-div-edge} are only valid when the generating function $\psi$ is differentiable with a continuous first-order derivative, we shall restrict our attention only to the class of functions $C^1([0,\infty))$ throughout this section. Although it is possible to have a statistically valid divergence with not-so-smooth $\psi$ functions (as in the case of FDPD), the interesting properties of the resulting minimum divergence estimators typically emerge only for sufficiently smooth choices of $\psi$ functions (e.g. $\psi(x) = x, \psi(x) = \ln(x), \psi(x) = x^\phi/\phi$, etc.).


We begin our analysis with the special case $\alpha +\beta = 1$ with $\alpha, \beta \in \R$. Under this choice of the hyperparameters, the GAB divergence reduces to
\begin{equation*}
    \dGAB^{(\alpha,\beta),\psi}(P, Q)
    = \begin{cases}
        [\alpha(1-\alpha)]^{-1} \left( \psi(1) - \psi(\inner{p,q}_{\alpha, 1-\alpha}) \right), & \text{if } \alpha \notin \{ 0, 1 \}, \\
        \psi'(1) \dKL(P, Q),                                                                   & \text{if } \alpha = 1, \beta = 0,    \\
        \psi'(1) \dKL(Q, P),                                                                   & \text{if } \alpha = 0, \beta = 1.
    \end{cases}
\end{equation*}
\noindent The limiting cases clearly lead to valid divergences if and only if $\psi'(1) > 0$, i.e., $\psi$ is strictly increasing at $1$. Interestingly, the same conclusion also holds when both $\alpha = \beta = 0$. We shall demonstrate using the following result that this increasing property of $\psi$ alone is a necessary as well as sufficient for well-definedness of the GAB divergence for the present cases, with $\alpha = \beta = 0$ or $\alpha + \beta = 1$.


\begin{theorem}\label{thm:gab-div-alpha-plus-beta-1}
    Suppose that either $\alpha + \beta = 1$ or $\alpha = \beta = 0$. Then, the GAB divergence is nonnegative if and only if the generating function $\psi$ satisfies the following conditions:
    \begin{enumerate}
        \item When $\alpha \in \{0, 1\}$, $\psi$ is increasing at $1$.
        \item When $\alpha \in (0, 1)$, there exists an $\epsilon > 0$ such that $\psi$ is increasing on $(1-\epsilon, 1]$.
        \item When $\alpha \notin [0, 1]$, there exists an $\epsilon > 0$ such that $\psi$ is increasing on $[1, 1+\epsilon)$.
    \end{enumerate}
\end{theorem}

We now turn our attention to the cases with $\alpha + \beta \notin \{0, 1\}$. Note that, this includes the scenario when exactly one of $\alpha$ or $\beta$ is equal to $0$. A careful inspection of the identities~\eqref{eqn:gab-to-kl-bzero}-\eqref{eqn:gab-to-kl-azero} reveals that, a sufficient condition to ensure nonnegativity of the GAB divergence is to ensure the nonnegativity of each of the terms, i.e., to establish
\begin{align*}
    \psi'(x) x \geq 0, \text{ and, }
    \psi'(x) x (\ln(x) - \ln(y)) - \psi(x) + \psi(y) \geq 0,
\end{align*}
\noindent where $x$ and $y$ are any real numbers. This reduction also uses the fact that the KL divergence is nonnegative. The first of the above conditions translates to the strictly increasing property of the generating function $\psi$, while the second condition is connected to the geometric convexity property of $\psi$. It turns out that these two conditions are both necessary and sufficient for the given forms of the GAB divergence as in Eq.~\eqref{eqn:defn-gab-div}-\eqref{eqn:defn-gab-div-edge} to be a valid statistical divergence measure, including the case when $\alpha$ and $\beta$ are nonzero real numbers satisfying $\alpha + \beta \notin \{0, 1\}$. The formal statement is given below.
\begin{theorem}\label{thm:gab-div-nec-suff}
    Let $\psi \in C^1([0, \infty))$ and $\alpha, \beta$ be real numbers satisfying $(\alpha +\beta) \notin \{0, 1\}$. Define $\Psi(x) := \psi(e^x)$ for all $x \in \R$. Then, the GAB divergence generated by the function $\psi$ is always nonnegative if and only if $\Psi$ is strictly increasing and convex. Also, the GAB divergence is equal to zero if and only if both its arguments match over the sets of sub-probability distributions.
\end{theorem}

When $\alpha = 1$ and $\beta > 0$, then the sufficient and necessary conditions obtained via Theorem~\ref{thm:gab-div-nec-suff} reduces to Proposition 4.1 and Proposition 4.2 of~\cite{ray2023functional}. Although the above result imposes a smoothness assumption of the generating function $\psi$, we require it only for proving the necessary part; see Lemma~\ref{lem:gab-sufficient} in the appendix for further details. It is indeed possible to avoid this smoothness assumption for $\psi$ to establish the necessary condition, but the proof becomes more involved and requires careful modifications of the arguments presented by~\cite{ray2023functional} without any clear statistical benefit. For the sake of simplicity, we present the result under the additional assumption of differentiability.

For the remaining cases where $\alpha = -\beta \neq 0$, the required conditions on $\Psi$ are a bit weaker compared to the ones in Theorem~\ref{thm:gab-div-nec-suff}. In particular, if $\Psi'(0) = 0$, then only a partial monotonicity property of $\Psi$ suffices to ensure the nonnegativity of the GAB divergence.

\begin{theorem}\label{thm:gab-div-nec-suff-case4}
    Let $\psi \in C^1([0, \infty))$ and $\alpha, \beta$ be real numbers satisfying $\alpha = -\beta \neq 0$. Define $\Psi(x) := \psi(e^x)$ for all $x \in \R$. Then, the GAB divergence generated by the function $\psi$ is always nonnegative if and only if $\Psi$ satisfies the following condition: for any $x > y \geq 0$, $\Psi(x) > \Psi(y)$ and $\Psi'(y) > \Psi'(0)$ whenever $\Psi'(0) \neq 0$. However, if $\Psi'(0) = 0$, then the necessary and sufficient condition reduces to only ensuring $\Psi(x) > \Psi(0)$ for any $x \neq 0$.
\end{theorem}

Theorems~\ref{thm:gab-div-alpha-plus-beta-1}, \ref{thm:gab-div-nec-suff} and \ref{thm:gab-div-nec-suff-case4} together completely characterize the necessary and sufficient conditions for Generalized Alpha-Beta (GAB) divergences to be valid statistical divergence measures. Typically, the required condition is the increasing and convexity property of the $\Psi$ function. However, when $\alpha + \beta = 1$ or both $\alpha = \beta = 0$, the required condition is much weaker, as only a locally increasing behavior of $\Psi$ at $x = 0$ suffices. When either $\alpha = 0$ or $\beta = 0$ but not both, the GAB divergence can be seen as a blended version of the KL divergence and a Bregman-type divergence. This perspective intuitively showcases a key property of the GAB divergence family: statistical efficiency is controlled by the KL-divergence part, while the Bregman divergence term serves as a lever to control robustness against outliers. Theorem~\ref{thm:gab-div-nec-suff-case4} also brings out another interesting observation, even when $\Psi$ is not increasing on the entire real line but satisfies a partial increasing and convexity requirements (e.g., $\Psi(x) = x^2$), the resulting GAB divergence is statistically valid when $\alpha + \beta = 0$.


\subsection{Construction of new divergences}\label{sec:gab-construct}

Significant attention has been spent in the past few decades to construct novel families of divergences bearing desirable statistical and information-theoretic properties~\citep{kuus2008extensions, osterreicher2003new}. The results presented in Section~\ref{sec:gab-char} allow us to take on this endeavor within the proposed generalized alpha-beta family of divergence, by using any increasing and convex function as $\Psi$. In the following discussion, we present a few strategies for this construction beyond the already known choices presented in Table~\ref{tab:gab-divergence-families} before.

A general way to build a valid characterizing function is to begin with an $L_{\alpha+\beta}$-integrable density function $f(x)$ (to be precise, any nonnegative $L_{\alpha+\beta}$-integrable function will work), and take $\Psi(x) = \int_{-\infty}^x F(t)dt$ where $F(x)$ is the corresponding cumulative distribution function. Note that, $\Psi(\cdot)$ is increasing and convex by construction. Then, the characterizing function $\psi(x) = \Psi(\ln(x)) = \int_{-\infty}^{\ln(x)} F(t)dt$. For example, if $F(x)$ is the Dirac delta distribution at $x = 0$, the resulting $\psi$ becomes $\ln(x)$, leading to the AC divergence as in~\eqref{eqn:defn-ac-div}. Further, starting with the exponential distribution, we obtain
\begin{equation*}
    \psi_{\text{Exp}}(x)
    = \begin{cases}
        0                & \text{ if } x < 1    \\
        1/x + \ln(x) - 1 & \text{ if } x \geq 1
    \end{cases},
\end{equation*}
\noindent and with the normal distribution, one obtains
\begin{equation}
    \psi_{\text{Normal}}(x)
    = \int_{-\infty}^{\ln(x)} \frac{\ln(x) - u}{\sqrt{2\pi}} e^{-u^2/2}du
    = \ln(x) \Phi(\ln(x)) + \phi(\ln(x)),
    \label{eqn:psi-normal-func}
\end{equation}
\noindent where $\phi(t)$ and $\Phi(t)$ are the standard normal density and cumulative distribution functions respectively. We illustrate a few interesting choices of the generating function $\psi$ in Figure~\ref{fig:gab-div-psi-functions}. Related construction of statistical divergences based on probability distributions is also explored by~\cite{fabian2003core} in the case of Johnson families of distributions.

\begin{figure}[ht]
    \centering
    \includegraphics[width=\linewidth]{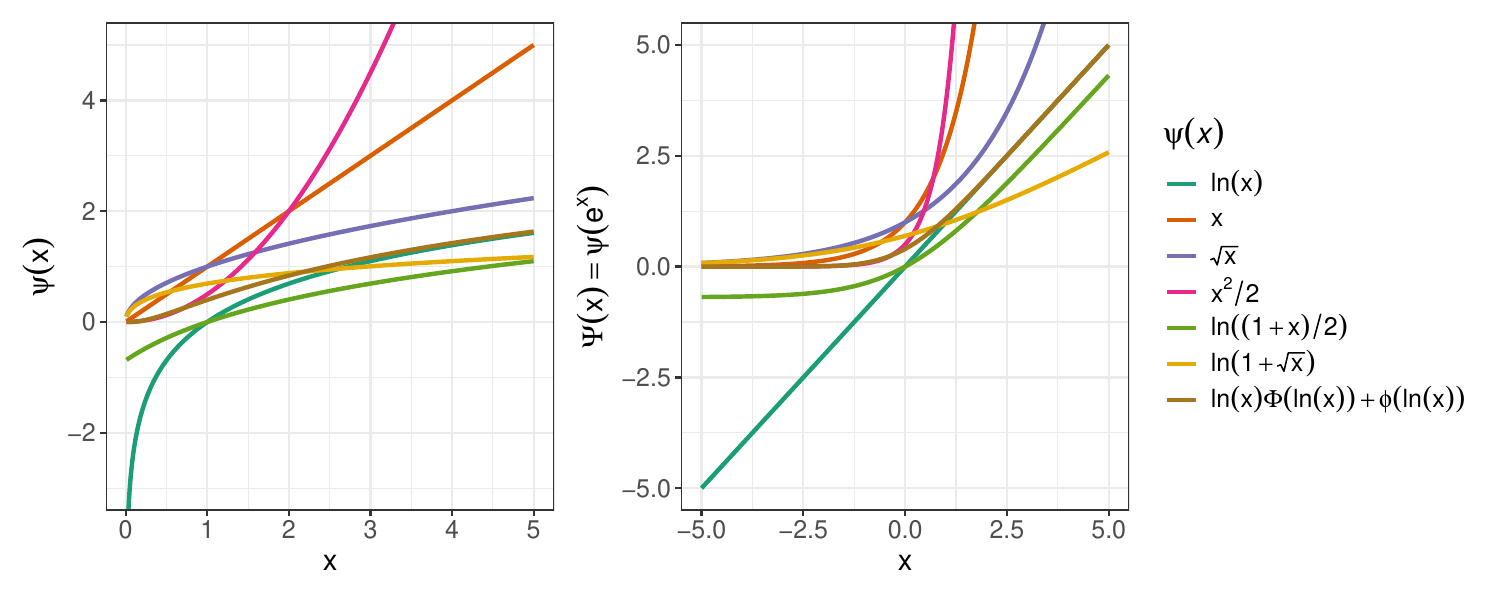}
    \caption{Different choices of generating functions $\psi$ and corresponding $\Psi(x) = \psi(e^x)$ for the GAB divergence.}
    \label{fig:gab-div-psi-functions}
\end{figure}

One may also consider any piece-wise linear function such that it is continuous and the slopes of each of the segments is increasing. Namely, take $\psi(x) = \Psi(\ln x)$ where
\begin{equation*}
    \Psi(x) = \begin{cases}
        a_1 x + b_1        & \text{ if } x \in (-\infty, c_1], \\
        a_2 x + b_2        & \text{ if } x \in (c_1, c_2],     \\
        \dots,                                                 \\
        a_k x + b_k        & \text{ if } x \in (c_{k-1}, c_k], \\
        a_{k+1}x + b_{k+1} & \text{ if } x \in (c_k, \infty)
    \end{cases}
\end{equation*}
\noindent such that $a_1 < a_2 < \dots < a_{k+1}$ and $a_ic_i + b_i = a_{i+1}c_i + b_{i+1}$ for all $i = 1, \dots, k$ to ensure continuity. Note that, although this $\Psi$ function lies in $C^0([0,\infty))$ instead of $C^1([0, \infty))$, its increasing and convex nature ensures that the resulting GAB divergence is nonnegative, expect for the edge cases with $\alpha\beta(\alpha+\beta) = 0$; see Appendix~\ref{appendix:proofs} for a proof. One may regard this choice as an approximation of a smooth generating function. This strategy can also be extended to produce a piece-wise polynomial $\psi$ function satisfying the increasingness, convexity and smoothness requirements, which are particularly useful for adaptive robust statistical inference; see Section~\ref{sec:conclusion} for a brief discussion.

Additionally, if we have two generating functions $\psi_1$ and $\psi_2$ corresponding to two different GAB divergence measures, we can also combine them to construct new classes of GAB divergences, resulting in a blended class of divergences; see~\cite{kuus2003blended} and~\cite{kuchibhotla2019statistical} for related divergences existing in the literature.
\begin{enumerate}
    \item By the necessary conditions given in Section~\ref{sec:gab-char}, we know that the corresponding $\Psi_1$ and $\Psi_2$ will be increasing and convex functions. Clearly, any linear combination of them with positive coefficients will also be increasing and convex. This means, for any $c_1, c_2 > 0$, $\psi := c_1 \psi_1 + c_2 \psi_2$ leads to a valid GAB divergence. One can also establish this by direct verification based on the form~\eqref{eqn:defn-gab-div}.
    \item Another way to combine $\Psi_1$ and $\Psi_2$ is to consider their composition. Since the composition $\Psi_1 \circ \Psi_2$ is also increasing and convex, it follows that $\psi(x) := \Psi_1(\psi_2(x))$ is a valid characterizing function.
\end{enumerate}

\subsection{Properties of the GAB divergence}\label{sec:property}

In this section, we highlight a few properties of the GAB divergence that extends various important existing results. Some properties are given below, which may be verified by straightforward algebraic manipulations from the definitions given in~\eqref{eqn:defn-gab-div} and~\eqref{eqn:defn-gab-div-edge}.
\begin{enumerate}
    \item \textbf{Symmetry (Duality):} As shown before in~\eqref{eqn:gab-symmetry}, the GAB divergence satisfies a symmetry or duality condition, i.e.,
          \begin{equation}
              \dGAB^{(\alpha,\beta),\psi}(P, Q) = \dGAB^{(\beta, \alpha),\psi}(Q, P).
              \label{eqn:gab-symmetry}
          \end{equation}
          Although the GAB divergence is not necessarily symmetric in the sense that in general, $\dGAB^{(\alpha,\beta),\psi}(P, Q)$ and $\dGAB^{(\alpha,\beta),\psi}(Q, P)$ are different, but the symmetry appears whenever $\alpha = \beta$. This symmetrization property along with triangle inequality can convert this divergence into a new metric; such efforts of metrization of divergences have lead to interesting discoveries~\citep{vajda2009metric, okamura2025metrization}.

    \item \textbf{Scaling Property:} Let $c > 0$ such that $cP, cQ \in \Mfrak_{\leq 1}(\Acal)$, and define $\psi_c(x) := \psi(cx)$. Then,
          \begin{equation*}
              \dGAB^{(\alpha,\beta),\psi}(cP, cQ) = \dGAB^{(\alpha,\beta),\psi_{c^{\alpha+\beta}}}(P, Q).
          \end{equation*}
    \item \textbf{Zooming Property:} For any $w \in \R \setminus \{0\}$, the GAB divergence between unnormalized $w$-escort distribution of $P$ and $Q$ satisfies
          \begin{equation}
              \dGAB^{(\alpha,\beta),\psi}(P^w, Q^w) = w^2 \dGAB^{(w\alpha,w\beta),\psi}(P, Q).
              \label{eqn:gab-zoom}
          \end{equation}
\end{enumerate}

The scaling and zooming properties are central to the proofs of the characterization theorems of the GAB divergence. Assume that for a sub-probability distribution $P \in \Mfrak_{\leq 1}(\Acal)$, its unnormalized $\alpha$-escort distribution $P^{\alpha}$ and $\beta$-escort distribution $P^{\beta}$ are well-defined. Then, as a consequence of the zooming property of GAB divergence as in~\eqref{eqn:gab-zoom}, without any loss of generality, any one of the hyperparameters among $\alpha$ or $\beta$ can be made equal to $1$, by modifying the arguments of the divergence suitably and multiplying the final divergence by an appropriate constant. This holds due to the following relationship,
\begin{equation}
    \dGAB^{(\alpha,\beta),\psi}(P, Q)
    = \dfrac{1}{\beta^2} \dGAB^{(\alpha/\beta,1),\psi}(P^\beta, Q^\beta)
    = \dfrac{1}{\alpha^2}\dGAB^{(1,\beta/\alpha),\psi}(P^\alpha, Q^\alpha).
    \label{eqn:gab-reduction-to-1}
\end{equation}

The following results establish another important property for the GAB divergence generalizing Propositions 3 and 4 of~\cite{kumar2015minimization}.

\begin{proposition}[Lower Semicontinuity in first argument]\label{thm:gab-semicont}
    Suppose $\psi \in C^0([0, \infty))$ and $\alpha + \beta > 0$ and $\alpha\beta \neq 0$. Then the GAB divergence is lower-semicontinuous in its first argument, i.e., if $P_n$ is a sequence of probability distributions converging to $P$ in $L_{\alpha+\beta}(\mu)$, then
    \begin{equation}
        \liminf_{n\rightarrow \infty} \dGAB^{(\alpha,\beta),\psi}(P_n, Q) \geq \dGAB^{(\alpha,\beta),\psi}(P, Q),
        \label{eqn:liminf-gab-div}
    \end{equation}
    \noindent for any $Q \in L_{\alpha+\beta}(\mu)$. If $\beta > 0$, then the semi-continuity can be strengthened to continuity.
\end{proposition}
\noindent While the above establishes the continuity of the GAB divergence in its first argument, an analogous result holds for the function $q \mapsto \dGAB^{(\alpha,\beta),\psi}(p, q)$ for a fixed $p$, establishing semi-continuity in the second argument. It follows trivially from Proposition~\ref{thm:gab-semicont} and the symmetry property~\eqref{eqn:gab-symmetry}.

\begin{proposition}[Lower Semicontinuity in second argument]
    Suppose $\psi \in C^0([0, \infty))$ and $\alpha + \beta > 0$ and $\alpha\beta \neq 0$. Then the GAB divergence is lower-semicontinuous in its second argument, i.e., if $Q_n$ is a sequence of probability distributions converging to $Q$ in $L_{\alpha+\beta}(\mu)$, then
    \begin{equation}
        \liminf_{n\rightarrow \infty} \dGAB^{(\alpha,\beta),\psi}(P, Q_n) \geq \dGAB^{(\alpha,\beta),\psi}(P, Q),
        \label{eqn:liminf-gab-div-second}
    \end{equation}
    \noindent for any fixed $P \in L_{\alpha+\beta}(\mu)$. If $\alpha > 0$, then the semi-continuity can be strengthened to continuity.
\end{proposition}

A fundamental property that allows a statistical divergence to be suitable for robust inference is the Pythagorean identity (or inequality). It turns out that, the proposed GAB divergence satisfies an approximate version of this property. Let $p_\epsilon$ be a $(\alpha,\epsilon)$-convex combination of two probability densities $p_0$ and $\delta$ over the same alphabet set $\Acal$, i.e.,
\begin{equation}
    p_\epsilon^\alpha = (1-\epsilon)p_0^\alpha + \epsilon \delta^\alpha, \
    \alpha > 0, \ \epsilon \in [0, 1].
    \label{eqn:alpha-convex-comb}
\end{equation}
\noindent The choice of such a nonlinear combination over the standard Huber contamination model has been previously adopted for the analysis of projection theorems and escort distributions, e.g., see~\cite{amari2010information, ghosh2021scale, gayen2021projection}. Let us also define, for any two sub-probability densities $p$ and $q$,
\begin{equation}
    \tilde{d}_{\psi}^{(\alpha,\beta)}(p, q) := -\frac{1}{\alpha\beta} \left[ \psi\left( \inner{p,q}_{\alpha,\beta} \right) -\lambda_\beta \psi\left( \norm{q}_{\alpha+\beta}^{\alpha+\beta} \right)\right],
    \label{eqn:dtilde-defn}
\end{equation}
\noindent when $\alpha\beta(\alpha+\beta)\neq 0$ and $\lambda_\beta = \alpha/(\alpha+\beta)$. It is easy to see that,
\begin{equation}
    \dGAB^{(\alpha,\beta),\psi}(p,q) = -\tilde{d}_{\psi}^{(\alpha,\beta)}(p,p) + \tilde{d}_{\psi}^{(\alpha,\beta)}(p, q)
    \label{eqn:dtilde-to-gabdiv}
\end{equation}
\noindent Now, we present a lemma that connects $\tilde{d}_{\psi}^{(\alpha,\beta)}(p_\epsilon,q)$ to $\tilde{d}_{\psi}^{(\alpha,\beta)}((1-\epsilon)^{1/\alpha} p,q)$, generalizing Lemma 3.1 of~\cite{fujisawa2008robust}, which provides the pathway to formally develop the approximate Pythagorean identity.

\begin{lemma}\label{lem:dtilde-error}
    Let $\alpha\beta(\alpha+\beta)\neq 0$, $p_0, q, \delta \in L_{\alpha+\beta}(\mu)$ and $\psi \in C^1((0,\infty))$ be a valid characterizing function for the GAB divergence. Define $p_\epsilon$ as in~\eqref{eqn:alpha-convex-comb}. Then,
    \begin{equation}
        \tilde{d}_{\psi}^{(\alpha,\beta)}(p_\epsilon, q)
        = \tilde{d}_{\psi}^{(\alpha,\beta)}((1-\epsilon)^{1/\alpha} p_0, q) + O(\epsilon \inner{\delta, q}_{\alpha,\beta}).
        \label{eqn:dtilde-error}
    \end{equation}
    \noindent and,
    \begin{equation}
        \tilde{d}_{\psi}^{(\alpha,\beta)}(p_0, q) = \tilde{d}_{\psi}^{(\alpha,\beta)}((1-\epsilon)^{1/\alpha} p_0, q) + O(\ln(1-\epsilon)).
        \label{eqn:dtilde-error-2}
    \end{equation}
\end{lemma}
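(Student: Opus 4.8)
The plan is to exploit the fact that the second term in the definition~\eqref{eqn:dtilde-defn} of $\tilde{d}_{\psi}^{(\alpha,\beta)}(p,q)$, namely $\lambda_\beta \psi(\norm{q}_{\alpha+\beta}^{\alpha+\beta})$, depends only on the second argument $q$ and is therefore common to all three quantities appearing in~\eqref{eqn:dtilde-error} and~\eqref{eqn:dtilde-error-2}. Consequently every difference of the form $\tilde{d}_{\psi}^{(\alpha,\beta)}(p, q) - \tilde{d}_{\psi}^{(\alpha,\beta)}(p', q)$ collapses to $-\frac{1}{\alpha\beta}\left[ \psi(\inner{p,q}_{\alpha,\beta}) - \psi(\inner{p',q}_{\alpha,\beta}) \right]$, so the entire problem reduces to controlling $\psi$ evaluated at two nearby values of the $(\alpha,\beta)$-inner product. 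The crucial linearity observation is that the inner product is linear in the first slot raised to the power $\alpha$: writing $s := \inner{p_0,q}_{\alpha,\beta}$ and $t := \inner{\delta,q}_{\alpha,\beta}$, the defining relation~\eqref{eqn:alpha-convex-comb} gives $\inner{p_\epsilon, q}_{\alpha,\beta} = (1-\epsilon)s + \epsilon t$, while the scaled density satisfies $\inner{(1-\epsilon)^{1/\alpha} p_0, q}_{\alpha,\beta} = (1-\epsilon)s$.

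For~\eqref{eqn:dtilde-error} I would subtract the two divergences and obtain $\tilde{d}_{\psi}^{(\alpha,\beta)}(p_\epsilon, q) - \tilde{d}_{\psi}^{(\alpha,\beta)}((1-\epsilon)^{1/\alpha} p_0, q) = -\frac{1}{\alpha\beta}\left[ \psi((1-\epsilon)s + \epsilon t) - \psi((1-\epsilon)s) \right]$. Since $\psi \in C^1((0,\infty))$, the mean value theorem yields $\psi'(\xi)\,\epsilon t$ for some $\xi$ lying between $(1-\epsilon)s$ and $(1-\epsilon)s + \epsilon t$. As $\epsilon \to 0$ this intermediate point stays in a fixed compact subinterval of $(0,\infty)$ around $s$, on which $\psi'$ is bounded by continuity; hence the difference is $O(\epsilon t) = O(\epsilon \inner{\delta,q}_{\alpha,\beta})$, which is precisely~\eqref{eqn:dtilde-error}.

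For~\eqref{eqn:dtilde-error-2} the same cancellation gives $\tilde{d}_{\psi}^{(\alpha,\beta)}(p_0, q) - \tilde{d}_{\psi}^{(\alpha,\beta)}((1-\epsilon)^{1/\alpha}p_0, q) = -\frac{1}{\alpha\beta}\left[ \psi(s) - \psi((1-\epsilon)s)\right]$. Here the two arguments differ by the multiplicative factor $(1-\epsilon)$ rather than an additive perturbation, so I would pass to the logarithmic variable through the identity $\psi(x) = \Psi(\ln x)$ with $\Psi(x) = \psi(e^x)$. This converts the factor $(1-\epsilon)$ into an additive shift, $\psi(s) - \psi((1-\epsilon)s) = \Psi(\ln s) - \Psi(\ln s + \ln(1-\epsilon))$. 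Applying the mean value theorem to $\Psi$ (which is $C^1$ because $\psi$ is and $x \mapsto e^x$ is smooth) produces $-\Psi'(\eta)\ln(1-\epsilon)$ for some $\eta$ near $\ln s$, and boundedness of $\Psi'$ on a compact neighbourhood of $\ln s$ delivers the $O(\ln(1-\epsilon))$ term; note that since $\ln(1-\epsilon) \sim -\epsilon$, this is equivalent to the $O(\epsilon)$ bound one gets by applying the mean value theorem directly to $\psi$.

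The main technical point to handle carefully is the uniformity of the implied constants: I must verify that $s$ and $t$ are finite and that $s > 0$, so that the intermediate points $\xi$ and $\eta$ remain inside $(0,\infty)$ (respectively $\R$), where $\psi'$ (respectively $\Psi'$) is continuous and hence locally bounded. Finiteness of the inner products follows from $p_0, q, \delta \in L_{\alpha+\beta}(\mu)$ together with H\"older's inequality applied with conjugate exponents $(\alpha+\beta)/\alpha$ and $(\alpha+\beta)/\beta$ in the case $\alpha, \beta > 0$, and from the standing integrability hypotheses in the general case; positivity of $s$ holds under the assumption that the densities share common support. These are the only places where the hypotheses $\alpha\beta(\alpha+\beta)\neq 0$ and the integrability conditions are genuinely used. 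Once the reduction has isolated a single $\psi$-difference in each case, the remaining mean value estimate is routine.
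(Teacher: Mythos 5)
Your proposal is correct and follows essentially the same route as the paper's proof: cancel the $q$-only term in $\tilde{d}_{\psi}^{(\alpha,\beta)}$, use the linearity $\inner{p_\epsilon,q}_{\alpha,\beta}=(1-\epsilon)\inner{p_0,q}_{\alpha,\beta}+\epsilon\inner{\delta,q}_{\alpha,\beta}$, apply the mean value theorem to $\psi$ for the first identity and to $\Psi$ in the logarithmic variable for the second, and control the derivative on a compact interval (the paper additionally invokes monotonicity of $\Psi'$ where you use plain local boundedness, an immaterial difference). No gaps.
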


\begin{theorem}[Approximate Pythagorean Identity]\label{thm:pythagorean}
    Let $\alpha\beta(\alpha+\beta) \neq 0$, $p_0,q,\delta \in L_{\alpha+\beta}(\mu)$, and $\psi \in C^1((0, \infty))$ be a valid characterizing function for the GAB divergence. Define $p_\epsilon$ as in~\eqref{eqn:alpha-convex-comb}. Then, an approximate Pythagorean relation holds between $p_0, p_\epsilon$ and $q$ in the metric defined through the GAB divergence, as given by
    \begin{equation*}
        \Delta(p_\epsilon, p_0, q)
        := \dGAB^{(\alpha,\beta),\psi}(p_\epsilon, q) - \dGAB^{(\alpha,\beta),\psi}(p_\epsilon, p_0) - \dGAB^{(\alpha,\beta),\psi}(p_0, q) = O(\epsilon v_\delta) + O(\ln(1-\epsilon)),
        \label{eqn:pythagorean-error}
    \end{equation*}
    \noindent where $v_\delta = \max\{ \inner{\delta, p_0}_{\alpha,\beta}, \inner{\delta, q}_{\alpha,\beta} \}$.
\end{theorem}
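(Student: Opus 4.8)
The plan is to reduce the claim to a short bookkeeping computation built on two ingredients already in hand: the identity~\eqref{eqn:dtilde-to-gabdiv}, which expresses each $d_{GAB}$ term through the auxiliary functional $\tilde{d}_{\psi}^{(\alpha,\beta)}$, and the two approximation estimates of Lemma~\ref{lem:dtilde-error}. First I would substitute~\eqref{eqn:dtilde-to-gabdiv} into each of the three divergences appearing in $\Delta(p_\epsilon, p_0, q)$. Writing $d_{GAB}^{(\alpha,\beta),\psi}(p,q) = -\tilde{d}_{\psi}^{(\alpha,\beta)}(p,p) + \tilde{d}_{\psi}^{(\alpha,\beta)}(p,q)$ for each pair, the two copies of the diagonal term $\tilde{d}_{\psi}^{(\alpha,\beta)}(p_\epsilon, p_\epsilon)$ coming from $d_{GAB}^{(\alpha,\beta),\psi}(p_\epsilon, q)$ and $d_{GAB}^{(\alpha,\beta),\psi}(p_\epsilon, p_0)$ cancel, leaving
\begin{equation*}
    \Delta(p_\epsilon, p_0, q) = \tilde{d}_{\psi}^{(\alpha,\beta)}(p_\epsilon, q) - \tilde{d}_{\psi}^{(\alpha,\beta)}(p_\epsilon, p_0) + \tilde{d}_{\psi}^{(\alpha,\beta)}(p_0, p_0) - \tilde{d}_{\psi}^{(\alpha,\beta)}(p_0, q).
\end{equation*}

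Next I would apply Lemma~\ref{lem:dtilde-error} to each of the four remaining terms, treating the second slot as a fixed parameter. Estimate~\eqref{eqn:dtilde-error} replaces the two terms with $p_\epsilon$ in the first slot by $\tilde{d}_{\psi}^{(\alpha,\beta)}((1-\epsilon)^{1/\alpha}p_0, q)$ and $\tilde{d}_{\psi}^{(\alpha,\beta)}((1-\epsilon)^{1/\alpha}p_0, p_0)$ at the cost of errors $O(\epsilon\inner{\delta, q}_{\alpha,\beta})$ and $O(\epsilon\inner{\delta, p_0}_{\alpha,\beta})$ respectively; estimate~\eqref{eqn:dtilde-error-2} replaces the two terms with $p_0$ in the first slot by the same two reference quantities $\tilde{d}_{\psi}^{(\alpha,\beta)}((1-\epsilon)^{1/\alpha}p_0, q)$ and $\tilde{d}_{\psi}^{(\alpha,\beta)}((1-\epsilon)^{1/\alpha}p_0, p_0)$, each at the cost of an error $O(\ln(1-\epsilon))$.

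The crucial observation is that after these substitutions the four reference terms cancel exactly in pairs: the quantity with second argument $q$ enters once with a plus and once with a minus sign, and likewise the quantity with second argument $p_0$. Consequently $\Delta(p_\epsilon, p_0, q)$ collapses to the sum of the four error contributions, namely $O(\epsilon\inner{\delta, q}_{\alpha,\beta}) + O(\epsilon\inner{\delta, p_0}_{\alpha,\beta}) + O(\ln(1-\epsilon))$. Bounding the two first-order errors by the single quantity $v_\delta = \max\{\inner{\delta, p_0}_{\alpha,\beta}, \inner{\delta, q}_{\alpha,\beta}\}$ produces the asserted estimate~\eqref{eqn:pythagorean-error}.

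Since Lemma~\ref{lem:dtilde-error} already absorbs the analytic content (the first-order expansion of $\psi$ around the inner product and the scaling comparison), the theorem itself is purely algebraic cancellation, and I do not expect a substantive obstacle. The one point deserving care is verifying that both estimates of the lemma hold uniformly enough in the second slot to be applied with that slot set to $p_0$ as well as to $q$; this is legitimate because the lemma is stated for an arbitrary $q \in L_{\alpha+\beta}(\mu)$ and $p_0 \in L_{\alpha+\beta}(\mu)$ by hypothesis, so all the implied constants remain finite.
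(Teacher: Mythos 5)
Your proposal is correct and follows essentially the same route as the paper: expand each divergence via~\eqref{eqn:dtilde-to-gabdiv}, cancel the diagonal terms, and control the four remaining terms with the two estimates of Lemma~\ref{lem:dtilde-error} (the $p_\epsilon$-terms via~\eqref{eqn:dtilde-error} and the comparison to $(1-\epsilon)^{1/\alpha}p_0$ via~\eqref{eqn:dtilde-error-2}), after which the reference terms cancel in pairs. Your closing remark about applying the lemma with the second slot equal to $p_0$ is exactly the point the paper also relies on, and it is justified since the lemma holds for any second argument in $L_{\alpha+\beta}(\mu)$.
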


Note that, Theorem~\ref{thm:pythagorean} is a generalization of the Pythagorean identity proved by~\cite[Theorem 3.2]{fujisawa2008robust} and of~\cite[Theorem 3.3]{kuchibhotla2019statistical} for the present case of GAB divergence. Compared to their results, our result includes an additional bias term of order $\ln(1-\epsilon)$ to the discrepancy $\Delta(p_\epsilon, p_0, q)$. A careful consideration of the proofs of Lemma~\ref{lem:dtilde-error} and Theorem~\ref{thm:pythagorean} reveal that this logarithmic term can be quantified as $\left[ \Psi'\left( y_0 \right) - \Psi'\left( \ln(1-\epsilon_0) + x_0 \right) \right] \ln(1-\epsilon)$, for any $\epsilon \leq \epsilon_0$, a pre-specified level of contamination, where $x_0 = \ln(\inner{p_0, q}_{\alpha,\beta})$ and $y_0 = \ln(\norm{p_0}_{\alpha+\beta}^{\alpha+\beta})$. In the case of logarithmic alpha-beta divergence, we have $\Psi(x) = x$, and hence this additional bias term equals $0$, leading to the exact Pythagorean identity.

\section{Theoretical analysis of GAB entropy}\label{sec:theory-gab-entropy}

\subsection{Properties of GAB entropy}\label{sec:gabe-properties}

We now redirect our efforts towards understanding various properties of the GAB entropy (GABE). As indicated in Section~\ref{sec:gab-entropy}, much of this discussion will be restricted to the case when $\Acal$ is finite. It can be easily seen from the definition in Eq.~\eqref{eqn:gab-entropy} that the GAB entropy satisfies the following properties.
\begin{enumerate}
    \item \textbf{Continuity:} For a finite alphabet $\mathcal{A}$, the GAB entropy is continuous in the $p_i = p(a_i)$ for all $i = 1, 2, \dots, \vert \Acal\vert$.
    \item \textbf{Symmetry:} $\eGAB^{(\alpha,\beta),\psi}(P)$ is a symmetric function of $\{ p_1, p_2, \dots p_{\vert \Acal\vert} \}$ for any $P \in \Mfrak_{\leq 1, \vert \Acal\vert}$.
    \item \textbf{Decisivity:} For any degenerate distribution $\delta_x$ at $x \in \mathcal{A}$, $\eGAB^{(\alpha,\beta),\psi}(\delta_x) = 0$.
    \item \textbf{Expandability:} For any sub-probability distribution $P := (p_1, \dots, p_n) \in \Mfrak_{\leq 1, n}$, we have
          \begin{equation*}
              \eGAB^{(\alpha,\beta),\psi}(P) = \eGAB^{(\alpha,\beta),\psi}(P^\ast),
          \end{equation*}
          \noindent where $P^\ast := (p_1, \dots, p_n, 0) \in \Mfrak_{\leq 1, n+1}$ is another sub-probability distribution over a new alphabet set having $(n+1)$ elements.
    \item \textbf{Maximum value:} For a finite alphabet $\Acal$ with $\vert \Acal\vert = n$, the maximum value of GABE measure is $\psi(n^{\alpha+\beta-1})/\alpha(\alpha+\beta)$. This is a simple consequence of the nonnegativity of the GAB divergence and the construction of the GABE as given in Eq.~\eqref{eqn:gab-entropy}.
\end{enumerate}




\subsection{Maximum entropy principle}\label{sec:maxent-distn}

The maximum entropy principle is a fundamental concept having various applications in statistics, information theory and statistical physics. A key property of any entropy measure is its concavity which allows one to apply the maximum entropy principle. From a statistical point of view, the maximum entropy principle can be viewed as a special case of the minimum divergence estimation principle. Fixing the reference measure $Q$ to be the uniform measure (for a finite alphabet set $\Acal$), minimizing the divergence to a uniform measure is mathematically equivalent to maximizing the associated entropy. The concavity of the entropy (and convexity of the divergence) ensures that these optimization problems are well-defined and have unique optimizer in standard settings. In the following proposition, we derive the exact set of conditions under which the concavity of the GAB entropy follows. A similar set of conditions for the logarithmic norm entropy is present in~\cite{ghosh2021scale}. The following proposition outlines the choices of $\alpha, \beta$ and $\psi$ for which this concavity property can be attained.

\begin{proposition}\label{thm:gab-entropy-concave}
    The GAB entropy $\eGAB^{(\alpha,\beta),\psi}$ is $\gamma$-quasi-concave (see Definition in Appendix~\ref{defn:quasi-convex}) if any of the following conditions hold
    \begin{enumerate}
        \item $\psi(x) = x$, with either $\alpha > 0, \beta \in (-\alpha, 0), \gamma \in [\alpha+\beta, \alpha]$ or $\alpha < 0, \beta \in (0, -\alpha), \gamma \in [\alpha, \alpha+\beta]$.
        \item $\psi$ is convex with $\alpha\beta < 0, \beta(\alpha+\beta) > 0$ and either of (i) $\gamma \in (-\infty, \min\{\alpha,\alpha+\beta\})$, (ii) $\gamma \in [0, \max\{\alpha, \alpha+\beta\}]$, (iii) $\gamma \in (-\infty, \alpha)$ if $\alpha < 0$, (iv) $\gamma \in (-\infty, \alpha +\beta)$ if $\alpha+\beta < 0$.
        \item $\psi$ is concave with either of (i) $\alpha, \beta < 0$ and any $\gamma \geq 0$. (ii) $\alpha(\alpha+\beta) < 0$ and $\gamma \geq \max\{\alpha, \alpha+\beta\}$. (iii) $\alpha < 0, \beta \in (0, -\alpha)$ and any $\gamma \geq 0$.
    \end{enumerate}
\end{proposition}

A consequence of Proposition~\ref{thm:gab-entropy-concave} is that under the same conditions, the GAB divergence is $\gamma$-quasi-convex in its first argument. This follows directly from the relationship that $\eGAB^{(\alpha,\beta),\psi}(P) = -\dGAB^{(\alpha,\beta),\psi}(P, Q_0) + h(Q_0)$ where $Q_0$ is the fixed uniform measure on the alphabet $\mathcal{A}$ and $h(Q_0)$ is a quantity free of $P$. On the other hand, because of the symmetry property~\eqref{eqn:gab-symmetry}, exchanging the role of $\alpha$ and $\beta$ in the conditions presented in Proposition~\ref{thm:gab-entropy-concave}, we obtain a set of sufficient conditions under which the GAB divergence is $\gamma$-quasi-convex in its second argument. These conditions are appropriately summarized in Figure~\ref{fig:gab-convexity}.


\begin{figure}[t]
    \centering
    \includegraphics[width=\linewidth]{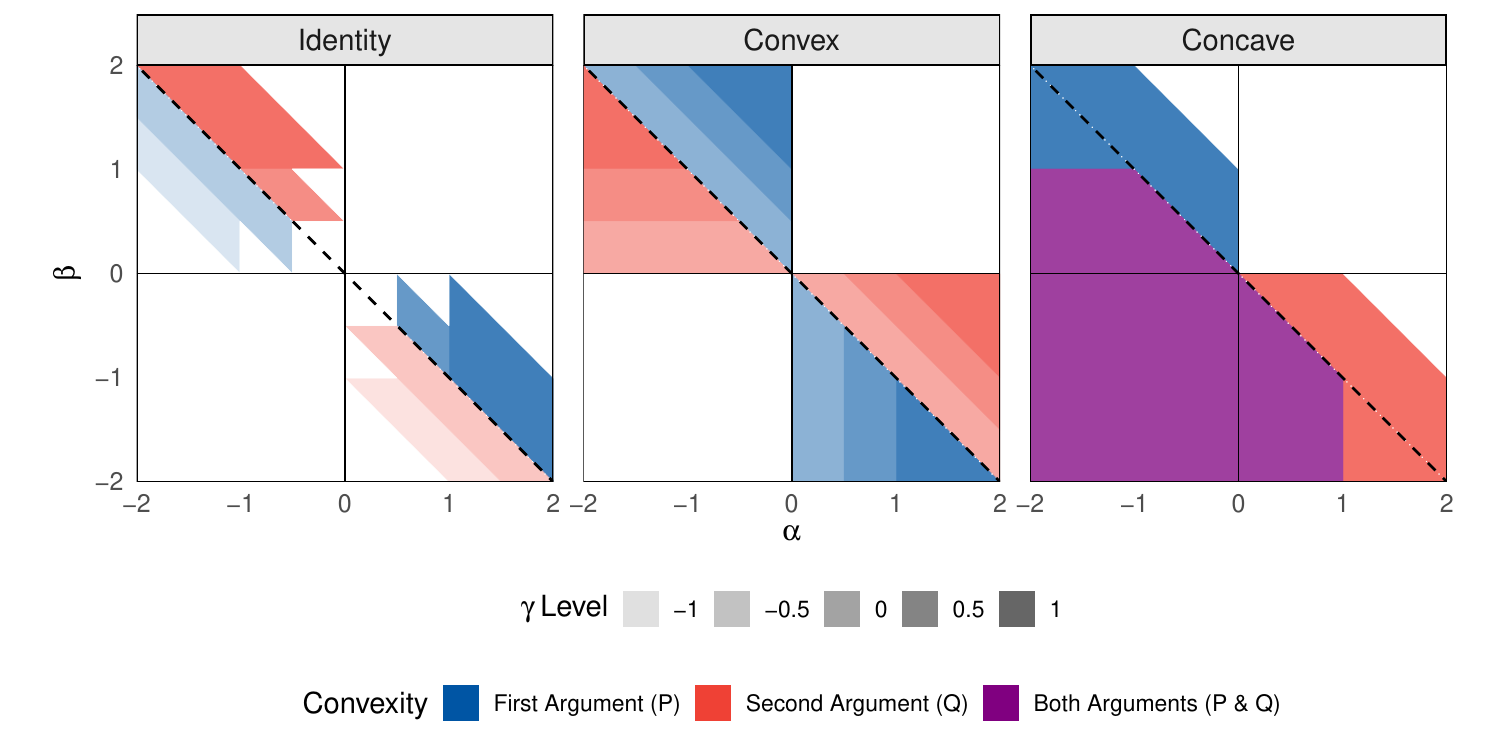}
    \caption{The regions indicating when the GAB divergence is $\gamma$-quas-convex in its first or second argument or both, provided different curvature conditions on $\psi$ function (indicated above each sub-figure) and at varying levels of $\gamma$. The region for $\psi(x) = x$ indicates the valid regions in addition to the regions for convex or concave $\psi$ functions.}
    \label{fig:gab-convexity}
\end{figure}

While the classical literature contains several instances where the maximum entropy distribution has been investigated under linear constraints (which translates to various moment-type conditions in statistics), modern literature in information theory generalizes the constraints to consider normalized $\beta$-expectations~\citep{ghosh2021scale}. Here, we consider the constraints for a probability distribution $P \in \Mfrak_{1, n}$ over a finite alphabet set $\Acal$ given by
\begin{equation}
    \dfrac{\sum_{i=1}^n g_r(a_i)p_i^\alpha }{\sum_{i=1}^n p_i^\alpha} = G_r,
    \label{eqn:maxent-constraints}
\end{equation}
\noindent for $r = 1, 2, \dots, m$, where $g_1, g_2, \dots, g_m: \Acal \rightarrow \R$ are $m$ given functions on $\Acal$ and $G_1, G_2, \dots, G_m$ are given fixed constants. In applications of statistical physics, these constraints in~\eqref{eqn:maxent-constraints} may indicate different physical and chemical properties of particles under a controlled environment in a closed system. The maximum entropy principle, in this case, tells us that the underlying system would aim to maximize its entropy, and the maximizing probability distribution can characterize the behavior (e.g. position, velocity) of the particles in the closed system~\citep{jaynes1957information,kapur1992entropy}.

Before presenting the result pertaining to the maximum entropy distribution, we define two key notations as follows. For a probability distribution $P \in \Mfrak_{1, n}$, we denote
\begin{equation}
    c_1(P) := \frac{\alpha+\beta}{\alpha} \left[ \ln\left( \norm{p}_{\frac{\alpha+\beta}{\alpha}} \right) - \ln\left( \norm{p}_{1/\alpha} \right) \right], \
    c_2(P) := -\ln(\norm{p}_{1/\alpha}), \
    c_3(P) := \sum_{i=1}^n p_i \ln(p_i).
    \label{eqn:c1p-defn}
\end{equation}
\noindent Equipped with this notation, we now present an implicit form of the maximizer of GABE under the constraints given in~\eqref{eqn:maxent-constraints}. In view of~\eqref{eqn:maxent-constraints}, it is only interesting to consider the case $\alpha \neq 0$. Additionally, since $\alpha + \beta = 1$ case always leads to the power divergence irrespective of the choice of $\psi$ (see the proof of Theorem~\ref{thm:gab-div-alpha-plus-beta-1}), we additionally restrict our attention to the case when $\alpha + \beta \neq 1$.

\begin{theorem}\label{thm:maxent-dist}
    Let $\alpha, \beta$ be real numbers such that $\alpha \neq 0, (\alpha+\beta) \neq 1$. For a finite alphabet set $\Acal$ with $\vert \Acal\vert = n$, the probability distribution $P = (p_1, \dots, p_n) \in \Mfrak_{1, n}$ which maximizes the GAB Entropy~\eqref{eqn:gab-entropy} with a valid generating function $\psi \in C^2([0,\infty))$ subject to the constraints~\eqref{eqn:maxent-constraints} is given by $Q^{[1/\alpha]}$, the $1/\alpha$-escorted version of the probability distribution $Q = (q_1, \dots, q_n)$ satisfying the following fixed point proportionality relationship.
    \begin{equation}
        q_i \propto \begin{cases}
            \left[  e^{c_2(Q)/\alpha} (\Psi'(c_1(Q)) - \Psi'(c_2(Q))) + \alpha\beta \sum_{r=1}^m \dfrac{\lambda_r g_r(a_i)}{q_i^{1/\alpha-1}} \right]^{\frac{\alpha}{\alpha+\beta-1}}
                                                                                                                                                                                                                     & \text{ if } \beta(\alpha+\beta) \neq 0, \\
            \exp\left[ \dfrac{\alpha^2}{\Psi'(c_2(Q))} \left\{ \sum_{r=1}^m \lambda_r g_r(a_i) + \frac{\Psi''(c_2(Q))}{e^{-c_2(Q)/\alpha}} \left( c_2(Q) + c_3(Q) \right) q_i^{\frac{1}{\alpha}-1} \right\} \right], & \text{ if } \beta = 0, \alpha \neq 0,   \\
            \left[ \sum_{r=1}^m \lambda_r g_r(a_i) + q_i^{1/\alpha-1} e^{c_2(Q)/\alpha} (n\Psi'(0) - \alpha\Psi'(c_2(Q)))  \right]^{-1},
                                                                                                                                                                                                                     & \text{ if } \beta = -\alpha \neq 0.
        \end{cases}
        \label{eqn:maxent-dist-type}
    \end{equation}


    \noindent In all cases, $\Psi(x) := \psi(e^x)$, $c_1(Q), c_2(Q)$ and $c_3(Q)$ are as defined in Eq.~\eqref{eqn:c1p-defn}, and the unknown constants $\lambda_1, \dots, \lambda_r$ are uniquely determined by the constraints given in~\eqref{eqn:maxent-constraints}.
\end{theorem}

Theorem~\ref{thm:maxent-dist} may be regarded as a generalization of the results on the maximum entropy distribution obtained by~\cite{ghosh2021scale}. Specifically, with the choice of $\Psi(x) := \psi(e^x) = x$ for the logarithmic Alpha-Beta divergence, we find that $\Psi'(x)=1$ and $\Psi''(x) = 0$ for any $x \in \R$, and hence the self-proportionality relationships given in Eq.~\eqref{eqn:maxent-dist-type} become simplified, namely one can obtain a closed form solutions to all of these fixed-point proportionality equations. For example, when $\beta(\alpha+\beta) \neq 0$, the entropy maximizing distribution is $1/\alpha$-escorted version of a distribution from the power-law family, while this maximizer becomes a member of the classical exponential family at $\beta = 0 \neq \alpha$; interested readers are referred to~\cite{ghosh2021scale} for further details.

In general, it is not known whether such a fixed-point proportionality relationship has a solution or if the solution is unique (up to a constant scaling factor) even if it exists. However, when the GAB entropy is concave, the uniqueness of the maximum entropy distribution is assured provided that the constraints~\eqref{eqn:maxent-constraints} have a feasible solution. Under such a scenario, one can numerically compute the maximum entropy distribution by following an iterative approach: start with any feasible solution $P_0$, compute its $\alpha$-escort distribution $Q_0$, and proceed by refining the estimate of $Q$ by carrying out one of the fixed-point equation~\eqref{eqn:maxent-dist-type} iteratively until convergence. While closed-form solutions are intractable, a careful look at Eq.~\eqref{eqn:maxent-dist-type} reveals many useful structures of the maximum entropy distribution. Let us indicate $K_1, K_2$ as some generic constants and $h(a_i) = \sum_{r=1}^m \lambda_r g_r(a_i)$. Then, structurally, the three different cases in Eq.~\eqref{eqn:maxent-dist-type} have the forms
\begin{align*}
    q_i^{(\alpha+\beta-1)/\alpha} & \propto K_1 + K_2 \frac{h(a_i)}{q_i^{1/\alpha - 1}}, \ \text{ if } \beta(\alpha+\beta) \neq 0,               \\
    q_i                           & \propto \exp\left[ K_1 h(a_i) + K_2 q_i^{1/\alpha - 1} \right], \ \text{ if } \beta = 0, \alpha \neq 0,      \\
    q_i                           & \propto \left[ h(a_i) + K_1 q_i^{1/\alpha - 1} \right]^{-1}, \ \text{ if } \beta \neq 0, \alpha + \beta = 0.
\end{align*}
\noindent In all these cases, if $h(a_i)$ are same for all $a_i \in \Acal$, then the maximum entropy distribution is the uniform distribution over $\Acal$ as expected. Otherwise, the shape of the maximum entropy distribution is largely controlled by the shape of $h(a_i)$, i.e., the moment restrictions. It results in an implicit generalization of the exponential family of distributions, which includes Tsallis-type power law families and deformed exponential distributions as special cases~\citep{naudts2004estimators, tsallis2009introduction, amari2011geometry}.

Although the above discussion pertains to the discrete distributions with finite alphabet set, Theorem~\ref{thm:maxent-dist} can be extended to discrete distributions with countably infinite support, and continuous distributions, provided all the required integrals (or sums) as in~\eqref{eqn:c1p-defn} remain finite and well-defined. For the finite alphabet case, the finiteness of these integrals (or sums) is automatic.

\section{Application in Robust Parametric Inference}\label{sec:robust-parametric-inference}

\subsection{The minimum GAB estimators}\label{sec:min-gab-est}

Consider a scenario where the true data-generating distribution $G$ has density $g$, and we fit a parametric model $f_\theta$ governed by $\theta \in \Theta$. The minimum GAB divergence estimate (MGABDE) of the parameter $\theta$ is defined as
\begin{equation*}
    \hat{\theta}^{(\alpha,\beta), \psi} := \argmin_{\theta \in \Theta} \dGAB^{(\alpha,\beta), \psi}(\hat{g}, f_\theta),
\end{equation*}
\noindent where $\hat{g}$ is an empirical approximation of the true $g$ based on the random sample $X_1, \dots, X_n$. The minimizing density $f_{\hat{\theta}^{(\alpha,\beta),\psi}}$ is also called the reverse projection of $\hat{g}$~\citep{gayen2021projection}. For real numbers $\alpha, \beta$ satisfying $\alpha\beta(\alpha+\beta) \neq 0$, minimizing the GAB divergence given in Eq.~\eqref{eqn:defn-gab-div} yields an estimating equation of the form
\begin{equation}
    \frac{\int f_\theta^{\alpha+\beta} u_\theta}{\xi(\norm{f_\theta}_{\alpha+\beta}^{\alpha+\beta}) } = \frac{\int f_{\theta}^\beta \hat{g}^\alpha u_\theta }{\xi(\inner{f_\theta, \hat{g}}_{\beta,\alpha})},
    \label{eqn:general-est-eqn}
\end{equation}
\noindent where $u_\theta(x) = \nabla_\theta \ln(f_\theta(x))$ is the score function,
and $\xi(x) = 1/\psi'(x)$; see \cite{windham1995robustifying} and \cite{jones2001comparison} for related calculations. Structurally, Eq.~\eqref{eqn:general-est-eqn} represents a generalized likelihood equation that matches a modified score functional $U_\alpha(G) = \int f_\theta^\beta g^\alpha u_\theta / \xi(\inner{f_\theta, g}_{\beta,\alpha})$ between its population and empirical counterparts.

While the hyperparameter $\alpha$ controls the baseline geometry of the cross-entropy between model density and the true density, we can isolate the mechanical intuition of $\psi$ by temporarily restricting our attention to the $\alpha = 1$ case. This case also simplifies the integrals $\inner{f_\theta, g}_{\beta,1}$ to be estimated via an empirical average $n^{-1}\sum_{i=1}^n f_\theta^\beta(X_i)$, allowing us to cleanly write the estimating equations as
\begin{equation}
    \frac{\int f_\theta^{\beta+1} u_\theta}{ \xi(\norm{f_\theta}_{\beta+1}^{\beta+1})} = \frac{n^{-1}\sum_{i=1}^n f_\theta^\beta(X_i) u_\theta(X_i) }{\xi(n^{-1}\sum_{i=1}^n f_\theta^\beta(X_i))}.
    \label{eqn:general-est-eqn-alpha1}
\end{equation}
\noindent This formulation explicitly decouples two distinct mechanisms of robustness. The exponent $\beta$ acts at the observation level. An anomalous data point $X_i$ yields a small model density $f_\theta(X_i)$, and $\beta > 0$ dictates the aggressive downweighting of the score $u_\theta(X_i)$ for that specific outlier. In contrast, the term, $n^{-1}\sum_{i=1}^n f_\theta^\beta(X_i)$, acts as a measure of aggregate goodness-of-fit across the entire sample. A significant drop in this empirical average relative to its theoretical expectation $\norm{f_\theta}_{\beta+1}^{\beta+1}$ indicates gross model misspecification that observation-level downweighting alone cannot rectify. The strict increasingness of $\psi$ ensures that this normalization factor remains positive, which is key to maintaining proper weighting. The curvature of the generating function $\psi$ strictly governs this macroscopic behavior. If $\psi$ is chosen to be strictly convex, its derivative $\psi'$ is a positive and increasing function, which renders $\xi(x)$ a strictly decreasing function. Consequently, when the aggregate model fit is poor (yielding a small empirical overlap), the denominator $\xi(n^{-1}\sum_{i=1}^n f_\theta^\beta(X_i))$ becomes correspondingly large, leading to a very small right-hand side in~\eqref{eqn:general-est-eqn-alpha1}. This stops any $\hat{\theta}$ leading to a poor fit from acting as viable solutions for the estimating equation. This global robustness behavior manifests through their high asymptotic breakdown points, as established in~\cite{roy2026gaboptimal}. In fact, achieving optimal statistical efficiency while securing this structural robustness forces us to look beyond the standard choices of $\psi(x) = x$ (AB-divergence) or $\psi(x) = \ln(x)$ (logarithmic AB-divergence), firmly motivating the study of the broader GAB class.

\subsection{Projection theorems and Conjugate Family}\label{sec:projection-theorems}

We now move on to a projection result for the GAB divergence to showcase that, under a suitable choice of the parametric model family, the MGABDE structurally simplifies to a scaled generalized method of moments estimator. For the ease of exposition, we restrict our attention to the case when the model family is discrete and the alphabet set $\Acal$ is finite, and the special case with $\alpha = 1$ and $\beta \neq 0$ corresponding to the functional geometry of the density power divergence. As noted before, the following discussions can be extended to discrete distributions with infinite support or continuous model families, provided that the necessary sums (or integrals) are well-defined and finite.

Under the discrete setting with finite alphabet set, the fixed-point proportionality relationship in~\eqref{eqn:maxent-dist-type} yields a parametric family of the form
\begin{equation}
    f_{\theta}(a_i) \propto \left[ \Psi'(\ln \norm{f_\theta}_{1+\beta}^{1 + \beta}) - \Psi'(0) + \beta \sum_{r=1}^m \theta_r g_r(a_i) \right]^{1/\beta},
    \label{eqn:maxent-dist-type1-alpha1}
\end{equation}
\noindent where $\theta = (\theta_1, \dots, \theta_m)$ is the vector of parameters. Defining the normalizing constant as $K(\theta) = \Psi'(\ln \norm{f_\theta}_{1+\beta}^{1+\beta}) - \Psi'(0)$, the density satisfies the algebraic identity $f_\theta^\beta(x) = K(\theta) + \beta \sum_{r=1}^m \theta_r g_r(x)$. Differentiating this identity with respect to $\theta$ yields a score function $u_\theta(x) = \nabla \ln f_\theta(x)$ characterized by
\begin{equation*}
    \beta f_\theta^\beta(x) u_\theta(x) = \nabla K(\theta) + \beta g(x),
\end{equation*}
\noindent where $g(x) = (g_1(x), \dots, g_m(x))\tr$ and $\nabla$ denotes the gradient operator with respect to $\theta$. For this choice of parametric model, the resulting estimating equation~\eqref{eqn:general-est-eqn-alpha1} can be simplified as
\begin{align}
             & \frac{1}{\xi(\norm{f_\theta}_{\beta+1}^{\beta+1})} \int f\theta^{\beta+1}(x) u_\theta(x) dx = \frac{1}{\xi(n^{-1}\sum_{i=1}^n f_\theta^\beta(X_i))} \frac{1}{n} \sum_{i=1}^n f_\theta^\beta(X_i) u_\theta(X_i) \nonumber                                                \\
    \implies & \frac{1}{\xi(\E_{f_\theta}[f_\theta^\beta(X)])} \E_{f_\theta}\left[ \beta^{-1}\nabla K(\theta) + g(X) \right] = \frac{1}{\xi(\E_n[f_\theta^\beta(X)])} \E_n\left[ \beta^{-1}\nabla K(\theta) + g(X) \right] \nonumber                                                   \\
    \implies & \frac{\E_n[g(X)]}{\xi(\E_n[f_\theta^\beta(X)])} = \frac{\E_{f_\theta}[g(X)]}{\xi(\E_{f_\theta}[f_\theta^\beta(X)])} + \frac{\nabla K(\theta)}{\beta} \left[ \psi'\left( \E_{f_\theta}[f_\theta^\beta(X)] \right) - \psi'\left( \E_n[f_\theta^\beta(X)] \right) \right],
    \label{eqn:projection-theorem-gmm}
\end{align}
\noindent where $\E_n$ indices the expectation operator with respect to the empirical distribution of the observed sample. Equation~\eqref{eqn:projection-theorem-gmm} reveals a structural alignment with the generalized method of moments, obstructed only by a bias term that arises from the non-orthogonality of projections within the curved geometry induced by $\psi$. This bias term vanishes entirely if and only if one of two global conditions holds for any arbitrary empirical sample: (i) $K(\theta) = \Psi'(\ln \norm{f_\theta}_{1+\beta}^{1+\beta}) - \Psi'(0)$ is constant. (ii) $\psi'( \E_{f_\theta}(\norm{f_\theta}_\beta^\beta)) = \psi'( \E_n(\norm{f_\theta}_\beta^\beta))$. Straightforward calculations to solve these differential equations force either $\Psi(x) = x$ or $\psi(x) = x$, up to some affine transformations. Consequently, when fitting the entropy-maximizing conjugate family, the estimating equation reduces to an exact generalized method of moments only for the specific cases of the standard AB-divergence and the logarithmic AB-divergence. This exact orthogonality property is precisely why the $O(\ln(1-\epsilon))$ bias term vanishes from the approximate Pythagorean identity (Theorem~\ref{thm:pythagorean}) strictly under these two geometries. In essence, this reveals that the exact mathematical conjugacy and computational tractability of standard projection theorems can be intentionally traded for the global robustness properties discussed above by suitably modifying the $\psi$-function.

\subsection{Illustrations under a discrete model: Geometric distribution}\label{sec:empirical-geom}

We consider the problem of estimating the success probability $p$ of a Geometric distribution. Let the parametric model be denoted by a probability mass function $f_p \sim \text{Geo}(p)$ with the true parameter $p_0$. We take the data-generating mechanism following a Huber contamination model as
\begin{equation*}
    g_\epsilon \sim (1-\epsilon)f_{p_0} + \epsilon \delta_{x_0},
\end{equation*}
\noindent where $\delta_{x_0}$ is a degenerate probability mass function at $x = x_0$. Our initial experiment aims to understand the breakdown behaviour of the MGABD functional under different choices of $\psi$ functions and choice of hyperparameter $\beta$ (restricting $\alpha$ to be $1$). In the first scenario, we set  $p_0 = 3/4$ and $x_0 = 10$, representing contamination by an outlier of large magnitude residing deep in the distribution's tail. Then, we calculate the relative estimation error (i.e., $\vert \hat{p} - p_0\vert / p_0$) for various $\psi$ functions, where $\hat{p} = \hat{T}(g_\epsilon)$ denotes the MGABD functional applied to the contaminated mass function $g_\epsilon$. The top panel of Figure~\ref{fig:gab-geometric} illustrates this relative error as a function of the contaminating proportion $\epsilon \in [0, 0.5]$. As evident from the plot, concave choices of $\psi$ (e.g., log) yield superior estimates, and the estimate's tolerance to the contamination proportion increases as the robustness parameter $\beta$ grows.

A contrasting view emerges when we consider the scenario with $p_0 = 1/10$ and $x_0 = 0$, which is described in the bottom panel of Figure~\ref{fig:gab-geometric}. In this regime, the contamination inflates the probability mass at $x = 0$, an event that is otherwise infrequent under the true low success probability. Here, concave $\psi$ function shows degrading performance; instead, convex generators like $\psi(x) = x^2$ deliver relatively greater stability. Naturally, when $\beta = 0$ and $\alpha = 1$, the MGABDE is equivalent to the non-robust MLE regardless of the chosen $\psi$ function. An interesting insight from the relative error curves given in Figure~\ref{fig:gab-geometric} is that the performances of MGABD functional for all evaluated $\psi$ functions are bounded between the extreme cases of the strictly concave $\psi(x) = \ln(x)$ and the strictly convex $\psi(x) = x^2$. This behavior empirically reflects the minimax optimality of the $(\phi,\gamma)$-divergence within the broader GAB class, as established by~\cite{roy2026gaboptimal}. The results by~\cite{roy2026gaboptimal} establish that to find statistically most efficient and robust estimators, one can confine the attention within a slight extension of the J2-divergence family introduced by~\cite{jones2001comparison}, i.e., consider transformations of the form $\psi(x) = \phi^{-1}x^\phi$ for $\phi \geq 0$. Note that, the concave logarithmic transformation ($\phi \to 0+$), identity transformation ($\phi = 1$), and convex quadratic transformation ($\phi = 2$) are members of this family.

\begin{figure}[htbp]
    \includegraphics[width = \textwidth]{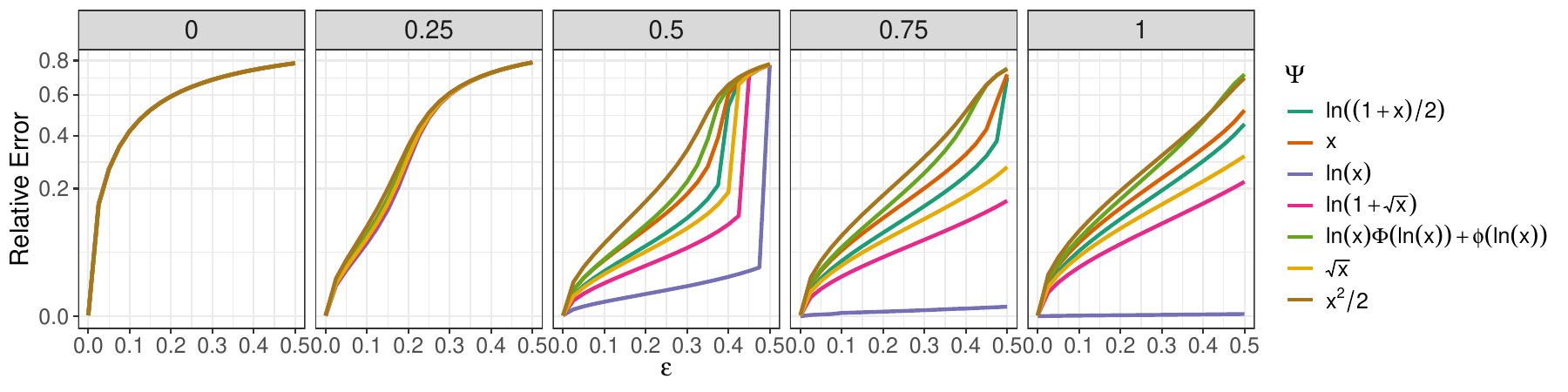}
    \includegraphics[width = \textwidth]{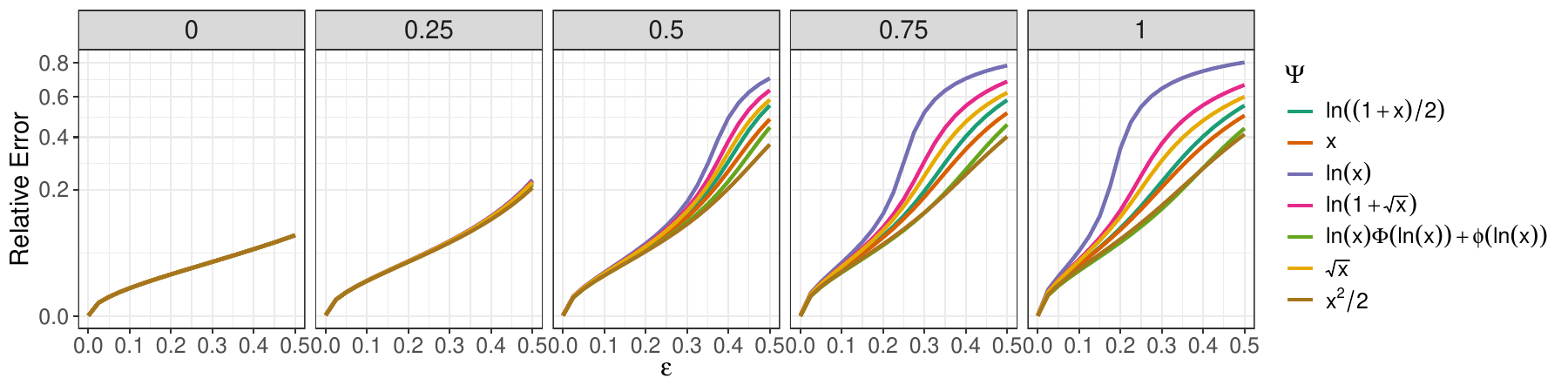}
    \caption{Relative error in estimating success probability of a Geometric distribution for different choices of $\psi$ function and choices of $\beta$ (panel title) and $\alpha = 1$, corresponding to situations: (Top) True $p_0 = 3/4$, contamination at $x_0 = 10$. (Bottom) True $p_0 = 1/10$, contamination at $x_0 = 0$.}
    \label{fig:gab-geometric}
\end{figure}

\begin{figure}[htbp]
    \includegraphics[width = \textwidth]{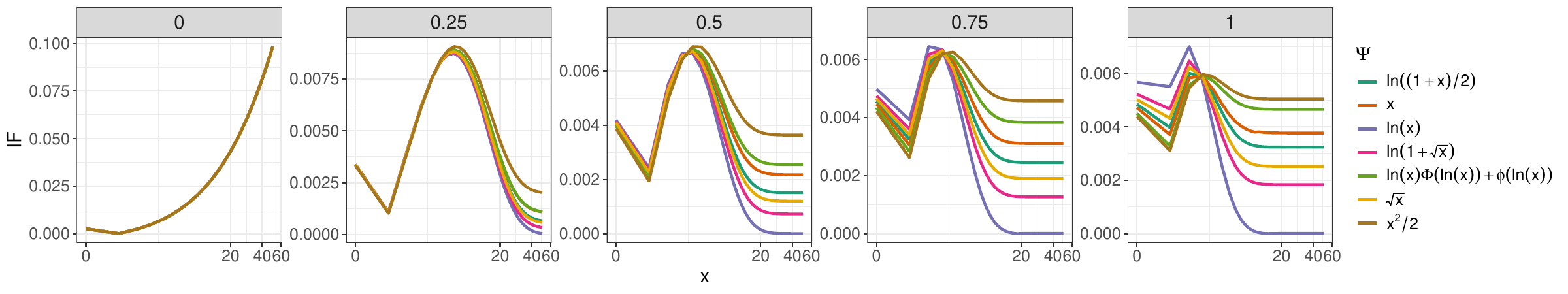}
    \caption{The finite-difference approximation of the influence function of the MGABD functional for different choices of $\psi$ functions and hyperparameters $\alpha = 1$ and varying $\beta$ (panel title), for the problem of estimating success probability in Geometric distribution. (x-axis is in log scale).}
    \label{fig:gab-geom-influence}
\end{figure}

While breakdown behavior focuses on the worst-case scenario, a clearer picture can be observed by examining the influence function of the MGABD functional. To evaluate this numerically, we consider a finite-difference approximation of the influence function given by
\begin{equation}
    \text{IF}(x) = \frac{1}{\epsilon}\left[ \hat{T}((1-\epsilon) g + \epsilon \delta_{x}) - \hat{T}(g) \right],
    \label{eqn:finite-diff-influence}
\end{equation}
\noindent where $g$ represents the true data-generating density (or mass functions) and $\hat{T}$ is the MGABD functional. We choose $\epsilon = 0.01$ in~\eqref{eqn:finite-diff-influence}, and fix the hyperparameters at $\alpha = 1$ and varying levels of $\beta \in [0, 1]$, a regime that generally ensures numerical stability. This can be seen as a population-level equivalent of Tukey's sensitivity curve~\citep{huber2002john}. In this setting, we take the true success probability $p_0 = 1/2$, varying the contamination location $x \in \{0, \dots, 50\}$. Figure~\ref{fig:gab-geom-influence} showcases these finite-difference approximation of the influence function as in~\eqref{eqn:finite-diff-influence}. As illustrated in Figure~\ref{fig:gab-geom-influence}, all evaluated choices of the $\psi$ function lead to redescending influence functions whenever $\beta > 0$. Specifically, the concave generator $\psi(x) = \ln(x)$ aggressively mitigates the impact of large, deep-tail outliers. Conversely, the convex generator $\psi(x) = x^2/2$ provides superior robustness when the outlying observations are concentrated near $x = 0$. Drawing on the intuition developed in Section~\ref{sec:min-gab-est}, this zero-inflation manifests as a model misspecification, a scenario where convex $\psi$ functions deliver reliable inference. Furthermore, while convex $\psi$ functions may incur a slight bias in the presence of large outliers, they offer the benefits of increased statistical efficiency under the true model; formal proofs of this behavior are outlined in~\cite{roy2026gaboptimal}.

\subsection{Illustrations under a continuous model: Normal scale estimation}\label{sec:normal-scale}

\begin{figure}[htbp]
    \includegraphics[width = \textwidth]{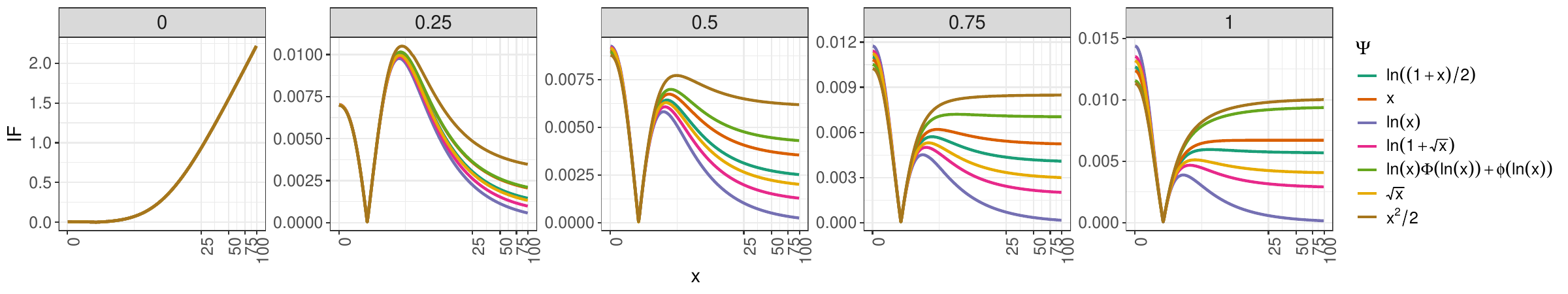}
    \includegraphics[width = \textwidth]{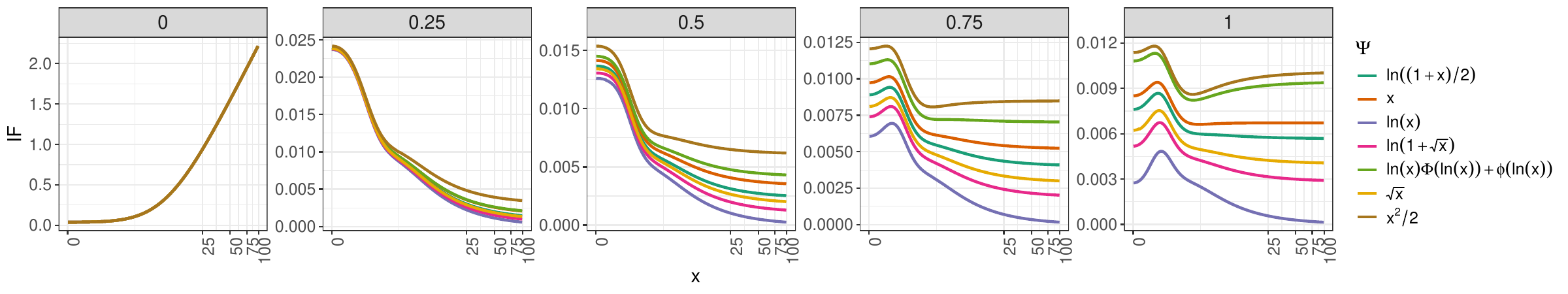}
    \includegraphics[width = \textwidth]{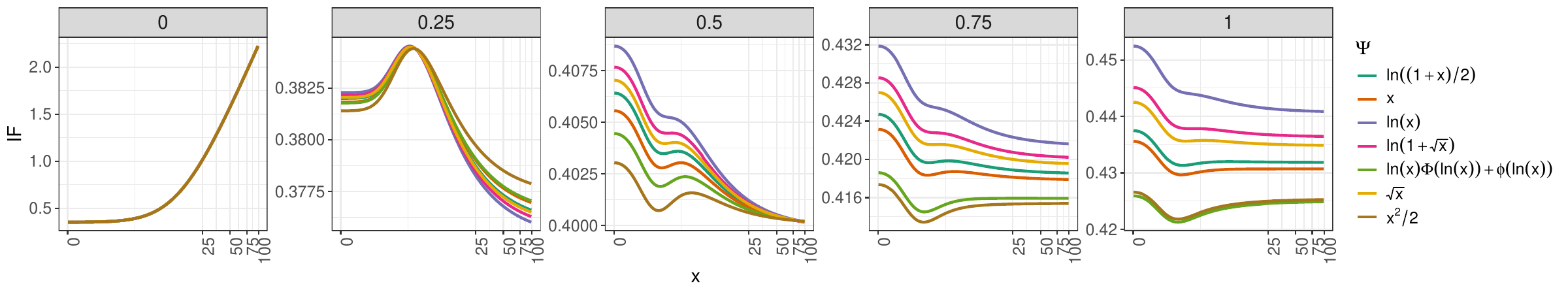}
    \caption{The finite-difference approximation of the influence function of the MGABD functional for different choices $\psi$ functions and hyperparameters $\alpha = 1$ and varying $\beta$ (panel title), for the problem of scale estimation in a normal model, with mean of contaminating distribution $\mu_c$ and mean of model family as $\mu$. (Top) $\mu_c = \mu = 0$. (Middle) $\mu_c = 3, \mu = 0$. (Bottom) $\mu_c = 3, \mu = 1$. (x-axis is in logarithmic scale).}
    \label{fig:gab-normal-influence}
\end{figure}

A careful consideration of the form of GAB divergence reveals that any choice of valid $\psi$-function leads to the same minimum GAB divergence estimator for any location estimation problem. Thus, avoiding this uninteresting case, we consider the standard problem of scale estimation in a normal location-scale family instead. Assume that the true distribution is a standard normal distribution (mean true mean $0$ and true variance $1$), the contaminating distribution is $N(\mu_c, \sigma_c^2)$ and the model family is given by $N(\mu, \sigma^2)$ with a known $\mu$. We consider the following three scenarios.
\begin{enumerate}
    \item[(a)] $\mu_c = \mu = 0$. The contaminating density, the true density and the model density all have known and equal mean.
    \item[(b)] $\mu_c = 3, \mu = 0$. The contaminating density has nonzero mean, but the true mean is known which is used to define the mean of the model family. This is typically an easier problem compared to (a) as the contaminating density has less overlap with the true density.
    \item[(c)] $\mu_c = 3, \mu = 1$. The contaminating density has nonzero mean, and the true mean is not known and one uses a misspecified model family. This is often the most realistic situation and harder than (a).
\end{enumerate}

For each scenario, we compute the finite-difference influence function of the MGABD functional as in~\eqref{eqn:finite-diff-influence} as a function of the contamination scale $\sigma_c = x$, tracking the sensitivity of the estimator under a small contamination proportion of $\epsilon = 0.01$. In Scenario (a), we observe a robustness pattern analogous to the Geometric distribution example in Section~\ref{sec:empirical-geom}. Specifically, convex $\psi$ functions yield greater stability for moderate values of $\sigma_c$, whereas larger values of $\sigma_c$ representing extreme heavy-tailed outliers demand concave $\psi$ generators to ensure reliable estimates. Conversely, in the well-separated contamination regime of Scenario (b), concave choices prove uniformly superior at aggressively downweighting and disregarding gross outliers. In contrast, in the highly challenging regime of Scenario (c), concave $\psi(x) = \ln(x)$ is typically the worst, convex $\psi$ functions uniformly dominating the rest of the choices.

\section{Conclusion}\label{sec:conclusion}

The Generalized Alpha-Beta (GAB) divergence introduced in this paper establishes a unifying framework for deriving novel statistical divergence measures, with promising applications across robust statistics, signal processing, machine learning and pattern recognition. Beyond defining the family, we have derived its necessary and sufficient characterizing properties, demonstrating that fundamental symmetries, scaling behaviors, and projection geometries are shared uniformly across this overarching class. Furthermore, we formulated the associated GAB entropy measure, highlighting its structural utilities within information theory and maximum entropy principles.

Empirically, we have demonstrated that the GAB superfamily offers exceptional flexibility in balancing local outlier resistance against global model reliability. Unlike standard divergence measures that offer a static robustness mechanism, the GAB class facilitates adaptive robust inference that can be governed by suitable adjustments to the curvature of $\psi$. For instance, motivated by the illustrations present in Section~\ref{sec:normal-scale}, one can construct a hybrid $\psi$ function that is concave on $[0, c]$ and convex on $[c, \infty)$ for some threshold $c > 0$, unlocking benefits of both worlds in the scale estimation problem. A large value of $\sigma$ results in a small value of norm $\norm{f_{\theta}}_{\alpha+\beta}$ and the inner products $\inner{f_\theta, g}_{\alpha,\beta}$, which evaluates within the concave domain of $\psi$, in turn, guaranteeing reliable performance under heavy variance inflation regime. Conversely, a small contaminating $\sigma$ evaluates to the convex domain of $\psi$, which activates the global robustifying property of the estimator. Whether such constructions lead to desired inferential properties without incurring extensive computational burden warrants further investigation, and will be taken up in future work.

Additionally, in a sequel paper~\citep{roy2026gaboptimal}, we derive the asymptotic properties of the minimum GAB divergence estimators, including consistency, asymptotic normality, influence function, and breakdown point behaviors. In fact, we characterize the minimax-optimal class of $\psi$ that functions which achieve the highest possible efficiency while retaining an exogenous level of breakdown point guarantee. Furthermore, exploring the conditions under which the symmetrized GAB divergence satisfies the triangle inequality for metrization, establishing generalized information-geometric projection theorems, or practical insights into data-driven tuning of hyperparameters $\alpha$ and $\beta$ as in~\cite{basak2021optimal}, remain exciting avenues for future extensions of this work.

\bibliography{references}

\pagebreak
\appendix

\section{Technical Results and Proofs of the Results}\label{appendix:proofs}

\subsection{Geometric Convexity Lemma}\label{appendix:geom-convex-lemma}

In this section, we present a lemma about the equivalence of the convexity of the function $\Psi(x)$ and the geometric convexity of the $\psi$ function, which we will use repeatedly in the characterization results later on.

\begin{lemma}\label{lem:geometric-convexity}
    Let $\Psi(x) := \psi(e^x)$ for any $x \in \R$ and $\lambda \in [0, 1]$ be a nonnegative real number. Then $\Psi$ is $\lambda$-convex if and only if for any $x, y \geq 0$, $\lambda \psi(x) + (1 - \lambda) \psi(y) \geq \psi(x^\lambda y^{1-\lambda})$.
\end{lemma}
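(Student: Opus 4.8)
The plan is to reduce the equivalence to the exponential change of variables $u = \ln x$, $v = \ln y$, under which the geometric convexity inequality for $\psi$ becomes the ordinary $\lambda$-convexity inequality for $\Psi$ verbatim. First I would dispose of the degenerate cases $\lambda = 0$ and $\lambda = 1$, for which both inequalities collapse to $\psi(y) \geq \psi(y)$ (respectively $\psi(x) \geq \psi(x)$) and hold trivially.

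For the remainder, I would fix $\lambda \in (0,1)$ and restrict first to strictly positive arguments $x, y > 0$. The map $(x, y) \mapsto (\ln x, \ln y)$ is a bijection from $(0,\infty)^2$ onto $\R^2$; writing $u = \ln x$ and $v = \ln y$, the defining relation $\Psi(t) = \psi(e^t)$ gives $\Psi(u) = \psi(x)$ and $\Psi(v) = \psi(y)$. The only computation needed is the identity $\lambda u + (1-\lambda) v = \ln(x^\lambda y^{1-\lambda})$, whence $\Psi(\lambda u + (1-\lambda) v) = \psi(x^\lambda y^{1-\lambda})$. Substituting these three identities, the statement that $\Psi$ is $\lambda$-convex, i.e. $\Psi(\lambda u + (1-\lambda)v) \le \lambda \Psi(u) + (1-\lambda)\Psi(v)$ for all $u, v \in \R$, is seen to coincide exactly with $\psi(x^\lambda y^{1-\lambda}) \le \lambda \psi(x) + (1-\lambda)\psi(y)$ for all $x, y > 0$. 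Since $u, v$ range over all of $\R$ precisely as $x, y$ range over $(0, \infty)$, the two assertions are logically equivalent on the interior, establishing both directions simultaneously there. In particular, the converse direction (geometric convexity implies $\lambda$-convexity of $\Psi$) needs nothing beyond the interior, because $\lambda$-convexity of $\Psi$ is a condition on $\R = \{\ln x : x > 0\}$ alone.

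The only remaining point is to extend the geometric convexity inequality to the boundary $x = 0$ or $y = 0$ in the forward direction, where the logarithmic substitution degenerates. I would handle this by a limiting argument: assuming continuity of $\psi$ at $0$ (available since we work within $C^0([0,\infty))$), take $x_n \downarrow 0$ with $y > 0$ fixed, apply the already-proved interior inequality to the pair $(x_n, y)$, and pass to the limit using $x_n^\lambda y^{1-\lambda} \to 0$ together with the continuity of $\psi$; the limit of a family of valid inequalities remains valid, yielding the claim. The symmetric case and $x = y = 0$ are immediate. This boundary behaviour is the only genuine subtlety; everything else is a direct term-by-term identification through the change of variables.
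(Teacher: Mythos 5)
Your proposal is correct and follows essentially the same route as the paper's proof: the exponential/logarithmic change of variables turning the geometric-convexity inequality for $\psi$ into the ordinary $\lambda$-convexity inequality for $\Psi$, applied in both directions. Your extra limiting argument for the boundary cases $x=0$ or $y=0$ is a small point of added care that the paper's proof passes over silently, but it does not change the substance of the argument.
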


\begin{proof}
    Let us assume that $\Psi$ is convex. Then,
    \begin{align}
        \lambda \psi(x) + (1 - \lambda) \psi(y)
         & = \lambda \Psi( \ln x) + (1 - \lambda) \Psi(\ln y) \nonumber                               \\
         & \geq \Psi\left( \lambda \ln(x) + (1-\lambda) \ln(y) \right) \label{eqn:geomconvex-proof-1} \\
         & = \psi(x^\lambda y^{1-\lambda}), \nonumber
    \end{align}
    \noindent where we make use of the convexity of $\Psi$ in~\eqref{eqn:geomconvex-proof-1}. Conversely, if we assume geometric convexity for $\psi$, then for any $x, y \in \R$ and $\lambda \in [0, 1]$, we have
    \begin{align}
        \lambda \Psi(x) + (1 - \lambda) \Psi(y)
         & = \lambda \psi(e^x) + (1 - \lambda) \psi(e^y)\nonumber                     \\
         & \geq \psi((e^x)^\lambda (e^y)^{1 - \lambda})\label{eqn:geomconvex-proof-2} \\
         & = \psi(e^{\lambda x + (1 - \lambda)y})\nonumber                            \\
         & = \Psi(\lambda x + (1 - \lambda)y),\nonumber
    \end{align}
    \noindent where in~\eqref{eqn:geomconvex-proof-2}, we use the geometric convexity of $\psi$.
\end{proof}

\subsection{Proof of Theorem~\ref{thm:gab-div-alpha-plus-beta-1}}

The cases with $\alpha = 1, \beta = 0$, $\beta = 1, \alpha = 0$ and $\alpha = \beta = 0$ is immediate. Therefore, we focus on the case with $\alpha \notin \{ 0, 1\}$. Now, because the Cressie-Reed power divergence is a valid divergence, it follows that for any $\alpha  \notin \{0, 1\}$ and any pair of sub-densities $p$ and $q$,
\begin{equation*}
    \dfrac{1}{\alpha(1-\alpha)} > \dfrac{1}{\alpha(1-\alpha)} \inner{p,q}_{\alpha,1-\alpha}.
\end{equation*}
\noindent We will consider two separate cases now.

\noindent\textbf{Case 1 with $\alpha \in (0, 1)$:} Assume that $\psi$ is increasing at $1$. By the form of power divergence, it follows that for any pair of sub-densities $p$ and $q$, we have $\inner{p, q}_{\alpha,1-\alpha} < 1$. Using the increasing behavior of $\psi$, it follows that $\psi(\inner{p,q}_{\alpha,1-\alpha}) < \psi(1)$. The nonnegativity of the GAB divergence now follows by rearranging this inequality.

Conversely, assume that the GAB divergence is nonnegative. Since $\inner{p, q}_{\alpha,1-\alpha} \in (0, 1]$, it is enough to show that for any $x \in (0, 1)$, $\psi(x) < \psi(1)$. Consider the densities $p(x) = (\theta+1)^{-1}\ind{[0, \theta+1]}(x)$ and $q(x) = (\theta+1)^{-1}\ind{[1, \theta+2]}(x)$ defined on the alphabet $\Acal = \R$. The nonnegativity of the GAB divergence between these densities $p$ and $q$ reduces to the condition $\psi(\theta/(\theta + 1)) < \psi(1)$. Letting $\theta = x/(1-x)$ now yields $\psi(x) < \psi(1)$ as we intended.

\noindent\textbf{Case 2 with either $\alpha \notin (0, 1)$:} Assume that $\psi$ is increasing at $1$. Similar to the previous case, it follows that for any pair of sub-densities $p$ and $q$, we have $\inner{p, q}_{\alpha,1-\alpha} > 1$, by the nonnegativity of the power divergence. Since $\psi$ is increasing at $1$, we have $\psi(\inner{p,q}_{\alpha,1-\alpha}) > \psi(1)$.

Conversely, assume that the GAB divergence is nonnegative. As before, it is enough to show that for any $x > 1$, we have $\psi(x) > \psi(1)$. Let $p$ and $q$ be the density functions for Gaussian distributions with mean $0$ and $\theta$ respectively but with a common variance equal to $1$. It is easy to see that $\inner{p, q}_{\alpha,1-\alpha} = \exp(-\theta^2\alpha(1-\alpha)/2)$. As $\alpha \notin [0,1]$, we must have $\alpha(1-\alpha) < 0$. If we pick $\theta = \sqrt{-2x/(\alpha(1-\alpha))}$, then the nonnegativity of the GAB divergence leads to the inequality $\psi(1) \leq \psi(x)$ as we wanted.

\subsection{Proof of Theorem~\ref{thm:gab-div-nec-suff}}\label{sec:appendix-proof-gab-nec-suff}

We show this proof for the necessary and sufficient conditions separately, by means of the following two lemmas.

\begin{lemma}\label{lem:gab-sufficient}
    Define $\Psi(x) := \psi(e^x)$ for all $x \in [0, \infty)$. If $\Psi$ is strictly increasing and convex, then the GAB generated by $\psi$ is nonnegative for all choices of $\alpha$ and $\beta$ such that at least one of $\alpha$ and $\beta$ is nonzero and $(\alpha + \beta) \neq 0$.
\end{lemma}

\begin{proof}
    If $P = Q$, then it is straightforward to verify that the GAB divergence is equal to $0$. Therefore, we focus on the case when $P \neq Q$. Based on the signs of $\alpha\beta,\alpha(\alpha+\beta)$ and $\beta(\alpha+\beta)$, we deal with different cases separately using different versions of H\"{o}lder's inequality.

    \noindent\textbf{Case 1 with $\alpha\beta > 0, \alpha(\alpha+\beta) > 0, \beta(\alpha+\beta) > 0$:} By applying H\"{o}lder's inequality on $p^{\alpha+\beta}$ and $q^{\alpha+\beta}$, we have
    \begin{equation}
        \int p^\alpha q^\beta \leq \left( \int p^{\alpha+\beta} \right)^{\lambda_\alpha} \left( \int q^{\alpha+\beta} \right)^{\lambda_\beta},
        \label{eqn:thm-gab-sufficient-1}
    \end{equation}
    \noindent where $\lambda_\alpha = \alpha/(\alpha+\beta)$ and $\lambda_\beta = (1 - \lambda_\alpha)$. Since $\Psi$ is convex, by applying Lemma~\ref{lem:geometric-convexity}, we have
    \begin{equation*}
        \lambda_\alpha \psi\left( \norm{p}_{\alpha+\beta}^{\alpha+\beta}\right) + \frac{\beta}{\alpha+\beta} \psi\left( \norm{q}_{\alpha+\beta}^{\alpha+\beta}\right)
        \geq  \psi\left( \norm{p}_{\alpha+\beta}^\alpha \norm{q}_{\alpha+\beta}^\beta \right)
        \geq \psi\left( \inner{p,q}_{\alpha,\beta} \right),
    \end{equation*}
    \noindent where the last inequality follows from the inequality~\eqref{eqn:thm-gab-sufficient-1} and the strictly increasing nature of $\psi$. Dividing both sides by $\alpha\beta$ now shows that the GAB divergence form as in~\eqref{eqn:defn-gab-div} is nonnegative. Note that, the equality holds if and only if $p^{\alpha+\beta} = cq^{\alpha+\beta}$ for some constant $c > 0$. Since $p$ and $q$ are densities, then they are equal almost surely.

    \noindent\textbf{Case 2 with $\alpha\beta < 0, \alpha(\alpha+\beta) > 0, \beta(\alpha+\beta) < 0$:} Note that, $(\alpha + \beta)/\alpha > 0$, $-\beta/\alpha > 0$ and they add up to $1$. Therefore, by applying H\"{o}lder's inequality on $p^{\alpha}q^\beta$ and $q^{\alpha+\beta}$, we get
    \begin{equation*}
        \left( \int p^{\alpha} q^\beta \right)^{(\alpha+\beta)/\alpha} \left( \int q^{\alpha+\beta} \right)^{-\beta/\alpha} \geq \int p^{\alpha+\beta}.
    \end{equation*}
    \noindent Using the increasingness and geometric-convexity of $\psi$, we obtain the chain of inequalities
    \begin{equation*}
        \dfrac{\alpha+\beta}{\alpha} \psi\left( \inner{p,q}_{\alpha,\beta} \right) - \dfrac{\beta}{\alpha} \psi\left( \norm{q}_{\alpha+\beta}^{\alpha+\beta} \right)
        \geq \psi\left( \inner{p, q}_{\alpha,\beta}^{(\alpha+\beta)/\alpha} \norm{q}_{\alpha+\beta}^{-\beta(\alpha+\beta)/\alpha} \right)
        \geq \psi\left( \int p^{\alpha+\beta} \right).
    \end{equation*}

    \noindent Dividing both sides by $\beta(\alpha+\beta)$ yields,
    \begin{equation*}
        \dfrac{1}{\alpha\beta} \psi\left( \inner{p,q}_{\alpha,\beta} \right) - \dfrac{\lambda_\beta}{\alpha\beta}\psi\left( \norm{q}_{\alpha+\beta}^{\alpha+\beta} \right) \leq \dfrac{\lambda_\alpha}{\alpha\beta} \psi\left( \norm{p}_{\alpha+\beta}^{\alpha+\beta} \right).
    \end{equation*}
    \noindent A rearrangement of the above shows that the GAB divergence is nonnegative. Similar to the previous case, it is easy to see that if $p = q$, then GAB divergence is equal to $0$.

    \noindent\textbf{Case 3 with $\alpha\beta < 0, \alpha(\alpha+\beta) < 0, \beta(\alpha+\beta) > 0$:} This follows from the previous case and the duality of the GAB as in~\eqref{eqn:gab-symmetry}.

    \noindent\textbf{Case 4 with $\alpha \neq 0, \beta = 0$:} Let $p$ and $q$ be two sub-densities. Let $x = \ln(\norm{p}_{\alpha}^{\alpha})$ and $y = \ln(\norm{q}_{\alpha}^{\alpha})$, then the GAB divergence given in~\eqref{eqn:gab-to-kl-bzero} reduces to
    \begin{equation*}
        \alpha^2 \dGAB^{(\alpha,0),\psi}
        = \Psi'(x) d^\ast(P^{[\alpha]}, Q^{[\alpha]}) + \left( \Psi(y) - \Psi(x) - \Psi'(x)(y-x) \right).
    \end{equation*}
    \noindent Since, $\Psi'(x) > 0$ due to the increasing nature of $\Psi$ and the KL-divergence is nonnegative, it is enough to show that $\Psi(y)-\Psi(x)-\Psi'(x)(y-x)$ is nonnegative. It now follows from the fact that $\Psi$ is convex and $\Psi'(x)$ is a subgradient of $\Psi$ at $x$. In particular, there can be three cases. If $y = x$, then the right-hand side is $0$, hence nonnegative. If $y > x$, then by exploiting the convexity and increasing property of $\Psi$, we get $\Psi'(x)\le \Psi'(y)$ and $\Psi(x)\le \Psi(y)$. So, by Lagrange's Mean Value theorem, there exists $z\in (x,y)$ such that
    \begin{equation*}
        \Psi'(x) \leq \Psi'(z) = \frac{\Psi(y)-\Psi(x)}{y-x}\le \Psi'(y).
    \end{equation*}
    \noindent On the other hand if $y < x$, then a similar deduction applies. This completes the proof for this case.

    \noindent\textbf{Case 5 with $\alpha = 0, \beta \neq 0$:} The case for $\alpha = 0, \beta \neq 0$ is straightforward due to the symmetry of GAB divergence as in Eq.~\eqref{eqn:gab-symmetry} and the previous case.
\end{proof}

\begin{lemma}\label{lem:gab-necessary}
    If the GAB divergence with a generating function $\psi \in C^1([0, \infty))$ is nonnegative, then $\Psi(x) := \psi(e^x)$ must be strictly increasing and convex, for any choice of $\alpha$ and $\beta$ such that
    at least one of $\alpha$ and $\beta$ is nonzero and $(\alpha + \beta) \notin \{0, 1\}$.
\end{lemma}

\begin{proof}
    To establish the necessary condition, we proceed by carefully constructing a pair of nonnegative measures such that the nonnegativity of the GAB divergence between them yields the necessary increasing and convexity properties. Similar to the proof of Proposition~\ref{lem:gab-sufficient}, we split the proof by considering three scenarios separately.

    \noindent\textbf{Case 1 with $\alpha\beta > 0, \alpha(\alpha+\beta) > 0$ and $\beta(\alpha + \beta) > 0$:}

    \noindent\emph{Increasingness:} To show that $\Psi$ is strictly increasing, it is enough to show that $\psi$ is strictly increasing. Take any $y > z > 0$. Consider two sub-densities $p(x) = c\ind{[0, y]}(x)$ and $q(x) = c\ind{[y-z, 2y-z]}(x)$ where $c$ is a sufficiently small fixed quantity. Note that, $\norm{p}_{\alpha+\beta}^{\alpha+\beta} = \norm{q}_{\alpha+\beta}^{\alpha+\beta} = c^{\alpha+\beta} y$ and $\inner{p,q}_{\alpha,\beta} = c^{\alpha+\beta}(y-(y-z)) = c^{\alpha+\beta} z$. From the nonnegativity of the GAB divergence and denoting $c' = c^{\alpha+\beta}$, we get
    \begin{equation*}
        \dfrac{1}{\beta(\alpha+\beta)} \psi(c' y) + \dfrac{1}{\alpha(\alpha+\beta)}\psi(c' y) - \dfrac{1}{\alpha\beta}\psi(c' z) > 0,
    \end{equation*}
    \noindent which can be rearranged as $\psi(c^{\alpha+\beta} y) > \psi(c^{\alpha+\beta} z)$. Since $y > z$ is arbitrary, this establishes the increasing nature of $\psi$.

    \noindent\emph{Convexity:} To show the convexity, consider again any $y > z > 0$. Let us consider the pair of sub-densities $p$ and $q$ given by
    \begin{align}
        p(x) & = c(\gamma(\alpha+\beta) + 1)^{1/(\alpha+\beta)}\theta^{-(\gamma+1)}x^{\gamma}\bb{1}_{(0,\theta)}(x),\nonumber \\
        q(x) & = c(\gamma(\alpha+\beta) + 1)^{1/(\alpha+\beta)}\eta^{-(\gamma+1)}x^{\gamma}\bb{1}_{(0,\eta)}(x),
        \label{eqn:f-g-power-family}
    \end{align}
    \noindent where $c$ is a small constant. Let $c' = c^{\alpha+\beta}$. Then it follows that
    \begin{equation*}
        \norm{p}_{\alpha+\beta}^{\alpha+\beta} =  c'\theta^{-(\alpha+\beta)+1},
        \
        \norm{q}_{\alpha+\beta}^{\alpha+\beta} = c' \eta^{-(\alpha+\beta)+1},
    \end{equation*}
    \noindent and,
    \begin{equation*}
        \inner{p,q}_{\alpha,\beta} = \begin{cases}
            c'\theta^{-(\gamma+1)\beta + (1-\alpha-\beta)} \eta^{-(\gamma+1)\beta} & \text{ if } \theta \leq \eta, \\
            c'\theta^{-(\gamma+1)\alpha} \eta^{(\gamma+1)\alpha+(1-\alpha-\beta)}  & \text{ if } \theta > \eta,
        \end{cases}
    \end{equation*}
    \noindent where $\theta, \eta, \gamma$ are parameters to be suitably chosen. We take $\theta = e^{-y/(\alpha+\beta - 1)}, \eta = e^{-z/(\alpha+\beta-1)}$.

    If $\alpha + \beta > 1$, then $y > z$ implies that $\theta < \eta$. The nonnegativity of the GAB divergence then reduces to
    \begin{equation*}
        \dfrac{1}{\beta(\alpha+\beta)}\Psi(c' y) + \dfrac{1}{\alpha(\alpha+\beta)}\Psi(c' z)
        > \dfrac{1}{\alpha\beta} \Psi\left( \left( 1 - \frac{\beta(\gamma+1)}{\alpha+\beta - 1} \right)c'y + \frac{\beta(\gamma+1)}{(\alpha+\beta-1)}c' z \right).
    \end{equation*}
    \noindent Letting $\gamma = -1/(\alpha+\beta)$ and multiplying both sides by $\alpha\beta$ we get that $\Psi$ is $\alpha/(\alpha+\beta)$-convex on $\R$. Since $\Psi$ is continuous, an application of Lemma 1.2 of Appendix A of~\citep{ray2023functional} now shows that $\Psi$ is convex.

    If $\alpha + \beta < 1$, then we still have $\beta/\alpha > 0$ and so $(1 + \beta/\alpha) > 1$. Due to the relationship~\eqref{eqn:gab-reduction-to-1}, we can modify the hyperparameters as $\alpha' = 1$ and $\beta' = \beta/\alpha$, which reduces to the former case with $\alpha + \beta > 1$.

    \noindent\textbf{Case 2 with $\alpha\beta < 0, \alpha(\alpha+\beta) > 0$ and $\beta(\alpha+\beta) < 0$:}

    \noindent\emph{Increasingness:} Similar to the above, to show the increasing nature of $\Psi$, it is enough to show that for any $y > z > 0$, $\psi(y) > \psi(z)$. To see this, we consider the pair of standard normal densities
    \begin{equation*}
        p(x) = \frac{1}{\sqrt{2\pi}\sigma}e^{-x^2/2\sigma^2}, \
        q(x) = \frac{1}{\sqrt{2\pi}\sigma}e^{-(x-\theta)^2/2\sigma^2}.
    \end{equation*}
    \noindent Elementary calculations yield that with $C = (2\pi)^{-(\alpha+\beta - 1)/2}\sigma^{-(\alpha+\beta - 1)}(\alpha+\beta)^{-1/2}$,
    \begin{equation*}
        \norm{p}_{\alpha+\beta}^{\alpha+\beta}
        = \norm{q}_{\alpha+\beta}^{\alpha+\beta} = C, \
        \inner{p,q}_{\alpha,\beta} = Ce^{-\frac{\alpha\beta\theta^2}{2(\alpha+\beta)\sigma^2} }.
    \end{equation*}

    If $(\alpha + \beta) \in (0, 1)$, then we choose
    \begin{align*}
        \sigma & = (2\pi)^{-1/2}(\alpha+\beta)^{-1/2(\alpha+\beta - 1)}z^{-1/(\alpha+\beta-1)}, \\
        \theta & = \sqrt{-2(\alpha+\beta)/\alpha\beta} (\ln(y/z))^{1/2}\sigma.
    \end{align*}
    \noindent With this specific choice, the nonnegativity of the GAB divergence leads to the inequality
    \begin{equation*}
        \dfrac{1}{\beta(\alpha+\beta)}\psi(z) - \dfrac{1}{\alpha\beta}\psi(y) + \dfrac{1}{\alpha(\alpha+\beta)}\psi(z) > 0.
    \end{equation*}
    \noindent Multiplying both sides by $\alpha\beta$ and rearranging above ensures $\psi(y) > \psi(z)$, as we wanted.

    If $\alpha + \beta < 0$ or $\alpha + \beta > 1$, then note that by choice of the case, we have $\beta/\alpha < 0$ and $(\alpha+\beta)/\alpha = (1 + \beta/\alpha) \in (0, 1)$. Then an application of the relationship~\eqref{eqn:gab-reduction-to-1} again reduces it to the former case $(\alpha+\beta) \in (0, 1)$, with new hyperparameters $\alpha' = 1$ and $\beta' = \beta/\alpha$.

    \noindent\emph{Convexity:} To show that $\Psi$ is convex, without the loss of generality, let us consider two real numbers $y > z \geq 0$. We consider the pair of nonnegative sub-densities given in~\eqref{eqn:f-g-power-family} again. Since $\alpha\beta < 0$, at least one of $\alpha$ and $\beta$ must be positive. Assume $\alpha > 0$ without the loss of generality. We take $\eta = e^{-y/(\alpha+\beta-1)}$ and take $\theta = \eta e^{(y-z)/(\gamma+1)\alpha}$. For this specific choice, the nonnegativity of the GAB divergence yields
    \begin{equation*}
        -\dfrac{\beta}{\alpha}\Psi\left(c' y \right) + \left( \dfrac{\alpha+\beta}{\alpha} \right) \Psi(c' z)
        \geq \Psi\left( \left( 1 - \dfrac{\alpha+\beta-1}{(\gamma+1)\alpha} \right)c' y + \dfrac{\alpha+\beta-1}{(\gamma+1)\alpha}c' z \right).
    \end{equation*}
    \noindent If $\alpha+\beta > 1$, then taking $\gamma = -1/(\alpha+\beta)$ shows that $\Psi$ is $(-\beta/\alpha)$-convex on $\R$. The rest of the proof now follows by continuity of $\Psi$ and an application of Lemma 1.2 of~\citep{ray2023functional}.

    If $\alpha + \beta \in (0, 1)$, there exists a sufficiently large but fixed $M > 0$ such that $M(\alpha+\beta) > 1$. Now, due to the relationship~\eqref{eqn:gab-zoom}, considering the new hyperparameters $\alpha' = M\alpha$ and $\beta' = M\beta$, and starting with the nonnegativity of the GAB divergence between $P^{1/M}$ and $Q^{1/M}$ leads to the former case with $\alpha + \beta > 1$.

    If $\alpha + \beta < 0$, then we note that as $\alpha(\alpha+\beta) > 0$, both $\alpha$ and $(\alpha+\beta)$ are of the same sign. Considering the new hyperparameters $\alpha' = 1$ and $\beta' = \beta/\alpha$ yields that $(\alpha'+\beta') = (\alpha+\beta)/\alpha > 0$. Therefore, using the relationship~\eqref{eqn:gab-reduction-to-1} we can again reduce it to one of the above cases.

    \noindent\textbf{Case 3 with $\alpha\beta < 0, \alpha(\alpha+\beta) < 0$ and $\beta(\alpha+\beta) > 0$:} This case follows from the previous case and is a direct application of the duality property of GAB divergence as shown in~\eqref{eqn:gab-symmetry}.

    \noindent\textbf{Case 4 with $\alpha \notin \{0, 1\}, \beta = 0$:}

    \noindent\emph{Increasingness:} Since $\psi \in C^1(\R)$, to show that $\Psi$ is strictly increasing, it is enough to show that for any $y > 0$, $\psi'(y) > 0$. Consider the pair of Gaussian densities $p$ and $q$,
    \begin{equation*}
        p(x) = \dfrac{1}{\sqrt{2\pi} \sigma} e^{-x^2/2\sigma^2}, \
        \text{and, }
        q(x) = \dfrac{1}{\sqrt{2\pi}\sigma}e^{-(x - \theta)^2/2\sigma^2}.
    \end{equation*}
    \noindent It follows that $\norm{p}_{\alpha}^{\alpha} = \norm{q}_{\alpha}^{\alpha} = (2\pi)^{-(\alpha-1)/2}\sigma^{-(\alpha-1)}\alpha^{-1/2}$. As $\alpha \neq 1$, we take
    \begin{equation*}
        \sigma = (2\pi)^{-1/2}(\alpha)^{-1/2(\alpha-1)} y^{-1/(\alpha-1)} > 0.
    \end{equation*}
    \noindent The nonnegativity of the GAB divergence as in~\eqref{eqn:gab-to-kl-bzero} now reduces to
    \begin{equation*}
        \psi'(y)y d^\ast(P^{[\alpha]}, Q^{[\alpha]}) - \psi(y) + \psi(y) > 0,
    \end{equation*}
    \noindent and since the KL-divergence is nonnegative, it implies that $\psi'(y) > 0$, as we wanted to show.

    \noindent\emph{Convexity:} Moving on to the convexity condition, let us pick any $y > z \geq 0$. We consider again the family of nonnegative sub-densities $p$ and $q$ given in~\eqref{eqn:f-g-power-family}. If $\alpha > 1$, then we pick $\theta = e^{-y/(\alpha-1)}$ and $\eta = e^{-z/(\alpha-1)}$. As $\theta < \eta$, elementary calculations show that
    \begin{equation*}
        \int p^\alpha \ln(p/q)
        = c^\alpha (\gamma+1)(\ln(\theta) - \ln(\eta)) \theta^{-(\alpha-1)}
        = (\gamma+1)e^y (z - y)/(\alpha-1).
    \end{equation*}
    \noindent Therefore, the nonnegativity of the GAB divergence translates to the inequality
    \begin{equation*}
        \dfrac{(\gamma+1)\alpha}{(\alpha-1)}\psi'(e^y)e^y (z - y) - \psi(e^y) + \psi(e^z) \geq 0.
    \end{equation*}
    \noindent Choosing $\gamma = -1/\alpha$ yields $\Psi(y) - \Psi(z) - \Psi'(y)(y - z) \geq 0$ for any $y > z$, which establishes the convexity of $\Psi$ function.

    If $\alpha \in (0, 1)$, then there exists a constant $M > 0$ such that $M\alpha > 1$. By considering the modified parameter $\alpha' = M\alpha$ and utilizing the zooming property as in~\eqref{eqn:gab-zoom}, we reduce this to the former case with $\alpha > 1$.

    If $\alpha < 0$, we can apply the same zooming property~\eqref{eqn:gab-zoom}, but with the zooming constant $w = (-1)$.

    \noindent\textbf{Case 5 with $\alpha = 0, \beta \notin \{0, 1\}$:} As before, the case for $\alpha = 0, \beta \notin \{0, 1\}$ follows from the symmetry of GAB divergence as in Eq.~\eqref{eqn:gab-symmetry} and the previous case.
\end{proof}

\subsection{Proof of Theorem~\ref{thm:gab-div-nec-suff-case4}}

\textbf{Sufficiency part:} We start with the sufficiency part first. Let $r(x) = \ln(p(x) / q(x))$. Then, the form of GAB divergence for $\alpha = -\beta \neq 0$ as given in Eq.~\eqref{eqn:defn-gab-div-edge} reduces to
\begin{equation}
    \dGAB^{(\alpha,-\alpha),\psi}(P, Q) = \dfrac{1}{\alpha^2} \left[ \psi( \textstyle \int e^{\alpha r} ) - \psi(1) - \psi'(1) \int \alpha r \right].
    \label{eqn:gab-div-alphabetazero}
\end{equation}

The nonnegativity of the GAB divergence can be established due to the following chain of inequalities.
\begin{align}
    \psi( \textstyle \int e^{\alpha r} ) - \psi(1) - \psi'(1) \int \alpha r
     & = \Psi( \textstyle \ln \int e^{\alpha r}) - \Psi(0) - \Psi'(0) \int \alpha r \nonumber                         \\
     & > \Psi( \textstyle \int \alpha r) - \Psi(0) - \Psi'(0) \int \alpha r, \label{eqn:gab-div-alphabetazero-suff-1} \\
     & \geq 0, \label{eqn:gab-div-alphabetazero-suff-2}
\end{align}
\noindent Here, the inequality~\eqref{eqn:gab-div-alphabetazero-suff-1} follows from Jensen's inequality that $\int e^{\alpha r} \geq e^{\int \alpha r}$, and the strictly increasing property of $\Psi$. The inequality~\eqref{eqn:gab-div-alphabetazero-suff-2} is a direct consequence of the convexity of $\Psi$, and follows from similar arguments as in Case 4 of the proof of Proposition~\ref{lem:gab-sufficient}.

However, if $\Psi'(0) = 0$, then the GAB divergence reduces to the expression $\Psi(\ln \textstyle \int e^{\alpha r}) - \Psi(0)$. This is nonnegative since $\Psi(x) > \Psi(0)$ for any $x \neq 0$.

\noindent \textbf{Necessary condition:} Proceeding to establish the necessary condition, we show this for two cases separately, depending on whether $\Psi'(0) = 0$ or not.

\noindent \emph{Case A $(\Psi'(0) = 0)$:} First note that due to the reduced form of GAB divergence as in Eq.~\eqref{eqn:gab-div-alphabetazero}, if $\Psi'(0) = 0$, then the nonnegativity of GAB divergence leads to the inequality,
\begin{equation}
    \Psi(\ln \textstyle\int e^{\alpha r}) - \Psi(0) > 0,
    \label{eqn:gab-div-alphabetazero-nec-1}
\end{equation}
\noindent where $r = \ln(p/q)$. Now pick any $x \neq 0$. We choose the densities $p$ and $q$ as
\begin{equation*}
    p(u) = \begin{cases}
        2a\frac{(1-b)}{(a-b)} & \text{ if } u \in [0, 1/2] \\
        2b\frac{(a-1)}{(a-b)} & \text{ if } u \in (1/2, 1]
    \end{cases},
    \ \quad
    q(u) = \begin{cases}
        2\frac{(1-b)}{(a-b)} & \text{ if } u \in [0, 1/2] \\
        2\frac{(a-1)}{(a-b)} & \text{ if } u \in (1/2, 1]
    \end{cases},
\end{equation*}
\noindent where the variables $a, b$ are to be described shortly. Note that, for any choice of $a$ and $b$, the above expressions lead to valid density functions. These careful constructions help us to express the integral
\begin{equation*}
    \int e^{\alpha r(u)}du = \int \left( \frac{p(u)}{q(u)}\right)^\alpha du = \frac{1}{2}\left( e^{\alpha a} + e^{\alpha b} \right).
\end{equation*}
\noindent Given any $x \neq 0$, one can obtain values of $a$ and $b$ such that the above expression equals $e^x$, which leads to the inequality $\Psi(x) > \Psi(0)$ as a consequence of~\eqref{eqn:gab-div-alphabetazero-nec-1}.

\noindent \emph{Case B $(\Psi'(0) \neq 0)$:} We indicate the proof for establishing increasingness and convexity property of $\Psi$ separately.

\noindent\emph{Increasingness:} Given any two real numbers $y > x \geq 0$, our goal is to show that $\Psi(y) > \Psi(x)$. Let us choose
\begin{equation*}
    r(u) = \begin{cases}
        a    & \text{ if } u \in [0, c], \\
        (-b) & \text{ if } u \in (c, 1],
    \end{cases}
\end{equation*}
\noindent where $c \in (0, 1)$ satisfies
\begin{equation*}
    \frac{c}{1-c} = \frac{\sinh(b/2)}{\sinh(a/2)}.
\end{equation*}
\noindent With this specific choice of $r$-function and the choice of $c$, elementary calculations show that one can choose $p(u) \propto e^{r(u)/2}$ and $q(u) \propto e^{-r(u)/2}$ leading to valid density functions. Now, we pick $a$ and $b$ such that they are the solutions to the simultaneous nonlinear equations given by
\begin{align*}
    \int_0^1 r(u)du            & = ac - b(1-c) = \frac{\Psi(x) - \Psi(0)}{\Psi'(0)}, \\
    \int_0^1 e^{\alpha r(u)}du & = ce^{\alpha a} + (1-c) e^{-\alpha b} = e^{y},
\end{align*}
\noindent which is well-defined as $\Psi'(0) \neq 0$. The feasibility of this solution can be established by applications of intermediate value theorems, and considering the behavior of the left-hand sides under appropriate limits. Equipped with this solution, the nonnegativity of the GAB divergence for this choice of $P$ and $Q$ leads to the inequality, $\Psi(y) - \Psi(x) > 0$, as we wanted to show.

\emph{Convexity:} We shall show that for any $x > 0$, the quantity $\Psi(x) - \Psi(0) - \Psi'(0)x > 0$. Without loss of generality, assume that $\alpha > 0$, otherwise, we can simply apply the same arguments with $\beta = -\alpha$. Choose $a = e^{x/\alpha}$. Clearly, $a > 1$. Then, we pick the densities $p$ and $q$ to be uniform densities over the region $[0, 1]$ and $[0, a]$ respectively. It follows that,
\begin{align*}
    \int \alpha r(u)du     & = \int_0^1 \alpha\ln(a)du = \ln(a^\alpha) = x, \\
    \int e^{\alpha r(u)}du & = \int_0^1 e^{\alpha \ln(a)}du = e^x.
\end{align*}
\noindent As a result, the nonnegativity of the GAB divergence as given in Eq.~\eqref{eqn:gab-div-alphabetazero} leads to the desired inequality
\begin{equation*}
    \Psi'(x) - \Psi(0) - \Psi'(0) x > 0,
\end{equation*}
\noindent where strict inequality follows since the densities $p$ and $q$ are not identical.

\subsection{Proof of Proposition~\ref{thm:gab-semicont}}

Suppose that the inequality~\eqref{eqn:liminf-gab-div} does not hold. In that case, we can find a subsequence $\{ n_j\}_{j=1}^\infty$ such that
\begin{equation}
    \lim_{j\rightarrow \infty} \dGAB^{(\alpha,\beta),\psi}(P_{n_j}, Q) < \dGAB^{(\alpha,\beta),\psi}(P, Q).
    \label{eqn:semicont-proof-1}
\end{equation}
\noindent For this subsequence, we have $\int \vert p_{n_j} - p\vert^{\alpha+\beta}d\mu \rightarrow 0$ as $P_n$ converges to $P$ in $L_{\alpha+\beta}(\mu)$ metric. It follows that there is a further subsequence $\{ n_{j_k} \}_{k=1}^\infty$ such that $p_{n_{j_k}}(a) \rightarrow p(a)$ as $k \rightarrow \infty$ for almost all $a \in \Acal$ with respect to $\mu$. Then, by a generalized DCT-type argument (see Lemma A.1 of~\cite{roy2026asymptotic}), it follows that for this specific subsequence, we have $p_{n_{j_k}}^{\alpha+\beta}$ converging to $p^{\alpha+\beta}$ in $L^1(\mu)$. In other words, $p_{n_{j_k}}^{\alpha+\beta}/q^{\alpha+\beta}$ converges to $(p/q)^{\alpha+\beta}$ in $L^1(q^{\alpha+\beta})$.

If $\beta < 0$, we make use of Lemma 1 of~\cite{teboulle1993convergence}, which establishes the semi-continuity of the function $f \mapsto \int h(f)\dd\nu$ for any convex function $h$ and probability measure $f$ and any nonnegative measure $\nu$. We take $h(x) = x^{\alpha/(\alpha+\beta)}$, which is convex as $\beta < 0$, $f = c(p/q)^{\alpha+\beta}$, where $c$ is an appropriate normalizing constant and $\nu = q^{\alpha+\beta}$. As a result, we obtain
\begin{equation*}
    \liminf_{n\rightarrow \infty} \int p_n^{\alpha} q^\beta \dd\mu \geq \int p^\alpha q^\beta \dd\mu.
\end{equation*}
\noindent In particular, considering the semi-continuity for the specific subsequence $\{ n_{j_k} \}_{k=1}^\infty$ in conjunction with the continuity of $\psi$ function, we obtain
\begin{equation}
    -\lim_{k \rightarrow \infty} \dfrac{1}{\alpha\beta} \psi\left( \int p_{n_{j_k}}^\alpha q^\beta \right) \geq -\dfrac{1}{\alpha\beta}\psi\left( \inner{p, q}_{\alpha,\beta} \right),
    \label{eqn:semicont-proof-2}
\end{equation}
\noindent since $\alpha\beta < 0$. As we have already established that $p_{n_{j_k}}^{\alpha+\beta}$ converges to $p^{\alpha+\beta}$ in $L^1(\mu)$ as $k \rightarrow \infty$, in conjunction with the continuity of $\psi$ function, we get
\begin{equation}
    \lim_{k \rightarrow \infty} \dfrac{1}{\beta(\alpha+\beta)} \psi\left( \int p_{n_{j_k}}^{\alpha+\beta} \right) = \dfrac{1}{\beta(\alpha+\beta)} \psi(\norm{p}_{\alpha+\beta}^{\alpha+\beta}).
    \label{eqn:semicont-proof-3}
\end{equation}
\noindent Adding both sides of the inequalities~\eqref{eqn:semicont-proof-2} and~\eqref{eqn:semicont-proof-3} yields that for the specific subsequence $\lim_{k\rightarrow \infty} \dGAB^{(\alpha,\beta),\psi}(p_{n_{j_k}}, q) \geq \dGAB^{(\alpha,\beta),\psi}(p, q)$. This contradicts the inequality~\eqref{eqn:semicont-proof-1}.

When $\beta > 0$, since $\alpha+\beta > \alpha$, the $L_{\alpha+\beta}(\mu)$ convergence of $P_n$ to $P$ implies $L_\alpha(\mu)$ convergence as well. Using Lemma A.1 of~\cite{roy2026asymptotic}, we note that the inequality~\eqref{eqn:semicont-proof-2} holds with equality in this case. As a result, we can establish that the lower semi-continuity follows with an equality in~\eqref{eqn:semicont-proof-1} instead of an inequality. By a similar argument as above, one can show that the GAB divergence, in this case, is also upper semicontinuous. Therefore, it is continuous when $\beta > 0$.



\subsection{Proof of Proposition~\ref{thm:gab-entropy-concave}}

Before proceeding with the proof, we present the definition of quasi-convexity and a subsequent technical lemma that describes the convexity nature of the $\psi(\norm{p}_a^a)$ under different conditions.

\begin{definition}\label{defn:quasi-convex}
    A function $F : \Mfrak_{1}(\Acal) \to \R$ is said to the $\gamma$-quasi-convex if for all $p_0, p_1 \in \Mfrak_1(\Acal)$ and any $\lambda \in [0, 1]$,
    \begin{equation*}
        F(p) \leq \lambda F(p_0) + (1-\lambda)F(p_1),
    \end{equation*}
    \noindent where $p^\gamma = \lambda p_0^\gamma + (1-\lambda)p_1^\gamma$. If the reverse inequality holds, we say $F$ is $\gamma$-quasi-concave.
\end{definition}

\begin{lemma}\label{lem:convex-psi}
    Let $\psi$ be a function such that $\Psi(x) := \psi(e^x)$ is nondecreasing and convex. Let $\psi(\norm{p}_a^a)$ is $\gamma$-quasi-convex if any of the following condition holds:
    \begin{enumerate}
        \item[(i)] $a \geq \gamma$ and $\psi(\cdot)$ is convex.
        \item[(ii)] $a\gamma \leq 0$.
    \end{enumerate}
    \noindent In contrast, it is $\gamma$-quasi-concave if $a \leq \gamma$ and $\psi$ is concave.
\end{lemma}

\begin{proof}
    Let $P, Q \in \Mfrak_1(\Acal)$ with corresponding densities $p$ and $q$. Pick any $\lambda \in [0, 1]$ and define $r^\gamma = \lambda p^\gamma + \bar{\lambda}q^\gamma$ where $\bar{\lambda} = 1 - \lambda$.

    \noindent\textbf{Convexity condition (i):} When $a \geq \gamma$, $x \mapsto x^{a/\gamma}$ is a convex function. Therefore, we have
    \begin{equation*}
        \norm{r}_a^a = \int (\lambda p^\gamma + \bar{\lambda}q^\gamma)^{a/\gamma}
        \leq \int (\lambda p^a + \bar{\lambda} q^a)
        = \lambda \norm{p}_a^a + \bar{\lambda} \norm{q}_a^a.
    \end{equation*}
    \noindent We now apply $\psi$ to both sides and using the nondecreasing and convexity properties of $\psi$, the desired inequality follows.

    \noindent\textbf{Convexity condition (ii):} By extended Minkowski's inequality (Lemma 1 of~\cite{ghosh2018generalizedentropy}), we obtain
    \begin{align}
                 & \norm{r}_a^\gamma
        \geq \lambda \norm{p}_a^\gamma + \bar{\lambda} \norm{q}_a^\gamma\nonumber                                                                                                                 \\
        \implies & \ln(\norm{r}_a^\gamma) \geq \ln( \lambda \norm{p}_a^\gamma + \bar{\lambda} \norm{q}_a^\gamma )\nonumber                                                                        \\
        \implies & \ln(\norm{r}_a^\gamma) \geq \lambda \ln(\norm{p}_a^\gamma) + \bar{\lambda} \ln(\norm{q}_a^\gamma), \label{eqn:convex-psi-proof-1}                                              \\
        \implies & \frac{a}{\gamma}\ln(\norm{r}_a) \leq \frac{a}{\gamma} \lambda \ln(\norm{p}_a) + \frac{a}{\gamma} \bar{\lambda} \ln(\norm{q}_a)\nonumber                                        \\
        \implies & \Psi\left( \ln(\norm{r}_a^a) \right) \leq \lambda \Psi\left( \ln(\norm{p}_a^a) \right) + \bar{\lambda} \Psi\left( \ln(\norm{q}_a^a) \right).    \label{eqn:convex-psi-proof-2}
    \end{align}
    \noindent All the inequalities follow when both $\gamma(\gamma - a) \geq 0$ and $a/\gamma \leq 0$, or equivalently $\gamma$ and $a$ have opposite signs.

    \noindent\textbf{Concavity condition:} When $a \leq \gamma$, $x \mapsto x^{a/\gamma}$ is a concave function. Therefore, we have
    \begin{equation*}
        \norm{r}_a^a = \int (\lambda p^\gamma + \bar{\lambda} q^\gamma)^{a/\gamma} \geq \lambda \norm{p}_a^a + \bar{\lambda} \norm{q}_a^a.
    \end{equation*}
    \noindent Since $\psi$ is increasing and concave, the result now follows by applying $\psi$ on both sides.
\end{proof}

We now proceed with the proof of Proposition~\ref{thm:gab-entropy-concave}. Let $P, Q \in \Mfrak_{1}(\Acal)$ with corresponding densities $p$ and $q$, and fix any $\lambda \in [0, 1]$. Define the linear combination $r = \lambda p + (1-\lambda)q$. To show that $\eGAB^{(\alpha,\beta),\psi}$ is concave, it is enough to show that the function $\psi(\norm{p}_\alpha^\alpha)/\alpha\beta$ is concave and the function $\psi(\norm{p}_{\alpha+\beta}^{\alpha+\beta})/(\beta(\alpha+\beta))$ is convex. We now carefully treat each possibility of the signs of $\alpha$ and $\beta$, and apply Lemma~\ref{lem:convex-psi} to obtain the required set of conditions. We simply enumerate all the possible cases below, which can be verified by straightforward applications of the previous lemma.
\begin{enumerate}
    \item $\alpha\beta > 0, \beta(\alpha + \beta) > 0, \alpha \leq \gamma, (\alpha +\beta)\gamma \leq 0$ and $\psi$ is concave.
    \item $\alpha\beta > 0, \beta(\alpha + \beta) > 0, \alpha \leq \gamma \leq \alpha + \beta$, and $\psi$ is both convex and concave, i.e., $\psi(x) = x$.
    \item $\alpha\beta > 0, \beta(\alpha+\beta) < 0$, $\max\{ \alpha, \alpha+\beta\} \leq \gamma$ and $\psi$ is concave.
    \item $\alpha\beta < 0, \beta(\alpha + \beta) > 0$, $\min\{\alpha, \alpha + \beta\} \geq \gamma$, and $\psi$ is convex.
    \item $\alpha\beta < 0, \beta(\alpha + \beta) > 0$, $\alpha \geq \gamma$, $(\alpha + \beta)\gamma \leq 0$, and $\psi$ is convex.
    \item $\alpha\beta < 0, \beta(\alpha + \beta) > 0$, $\alpha\gamma \leq 0$, $\alpha + \beta \geq \gamma$, and $\psi$ is convex.
    \item $\alpha\beta < 0, \beta(\alpha + \beta) > 0$, $\alpha\gamma \leq 0$ and $(\alpha+\beta)\gamma \leq 0$. This is impossible.
    \item $\alpha\beta < 0, \beta(\alpha+\beta) < 0$, $\alpha \geq \gamma \geq (\alpha + \beta)$, and $\psi$ is both convex and concave.
    \item $\alpha\beta < 0$, $\beta (\alpha + \beta) < 0$, $\alpha\gamma \leq 0$, $\alpha + \beta \leq \gamma$ and $\psi$ is concave.
\end{enumerate}
\noindent Combining all these cases, we obtain the following summary conditions:
\begin{enumerate}
    \item $\psi(x) = x$, with either $\alpha > 0, \beta \in (-\alpha, 0), \gamma \in [\alpha+\beta, \alpha]$ or $\alpha < 0, \beta \in (0, -\alpha), \gamma \in [\alpha, \alpha+\beta]$.
    \item Convex $\psi$ with $\alpha\beta < 0, \beta(\alpha+\beta) > 0$ and either of (i) $\gamma \in (-\infty, \min\{\alpha,\alpha+\beta\})$, (ii)$\gamma \in [0, \max\{\alpha, \alpha+\beta\}]$, (iii) $\gamma \in (-\infty, \alpha)$ if $\alpha < 0$, (iv) $\gamma \in (-\infty, \alpha +\beta)$ if $\alpha+\beta < 0$.
    \item Concave $\psi$ with either of (i) $\alpha, \beta < 0$ and any $\gamma \geq 0$. (ii) $\alpha(\alpha+\beta) < 0$ and $\gamma \geq \max\{\alpha, \alpha+\beta\}$. (iii) $\alpha < 0, \beta \in (0, -\alpha)$ and any $\gamma \geq 0$.
\end{enumerate}

\subsection{Proof of Lemma~\ref{lem:dtilde-error}}\label{appendix-proof:lem-dtilde-error}

We start with Eq.~\eqref{eqn:dtilde-error}. Note that the difference,
\begin{align}
    \tilde{d}_{\psi}^{(\alpha,\beta)}(p_\epsilon, q)
    - \tilde{d}_{\psi}^{(\alpha,\beta)}((1-\epsilon)^{1/\alpha} p_0, q)
     & = \frac{1}{\alpha\beta}\left[ \psi\left( (1-\epsilon)\inner{p_0,q}_{\alpha,\beta} \right) - \psi\left( \inner{p_\epsilon,q}_{\alpha,\beta} \right) \right]\nonumber \\
     & = \dfrac{\psi'(c)}{\alpha\beta} \epsilon \inner{\delta, q}_{\alpha,\beta},
    \label{eqn:dtilde-error-proof-1}
\end{align}
\noindent where the last line follows from an application of Mean Value Theorem, and $c = \int ((1-\epsilon)p_0^\alpha + \tau \delta^\alpha)q^\beta$ for some $\tau \in [0, \epsilon]$. Since $p_0, q, \delta \in L_{\alpha+\beta}(\mu)$, by an application of Young-type inequalities (see the proof of Lemma~\ref{lem:gab-sufficient} for the technical details), it follows that $c$ is finite as $\int p_0^\alpha q^\beta \dd\mu$ and $\int \delta^\alpha q^\beta \dd\mu$ are finite. As $\psi'$ is continuous, it achieves a finite maximum value over the compact interval with endpoints $\int p_0^\alpha q^\beta \dd\mu$ and $\int p_\epsilon^\alpha q^\beta \dd\mu$. Hence, the right-hand side of~\eqref{eqn:dtilde-error-proof-1} is $O(\epsilon \inner{\delta,q}_{\alpha,\beta})$. This proves~\eqref{eqn:dtilde-error}.

Moving on, let us consider the difference
\begin{align*}
    \tilde{d}_{\psi}^{(\alpha,\beta)}(p_0, q) - \tilde{d}_{\psi}^{(\alpha,\beta)}((1-\epsilon)^{1/\alpha} p_0, q)
     & = \dfrac{1}{\alpha\beta}\left[ \psi\left( (1-\epsilon)\inner{p_0,q}_{\alpha,\beta} \right) - \psi\left( \inner{p_0,q}_{\alpha,\beta} \right) \right] \\
     & = \dfrac{1}{\alpha\beta}\left[ \Psi\left( x_0 + \ln(1-\epsilon) \right) - \Psi(x_0) \right]                                                          \\
     & = \dfrac{1}{\alpha\beta} \Psi'(x_0 + \ln(1-\tau)) \ln(1-\epsilon)                                                                                    \\
     & \leq \dfrac{1}{\alpha\beta} \Psi'\left( \inner{p_0,q}_{\alpha,\beta} \right) \ln(1-\epsilon),
\end{align*}
\noindent where $x_0 = \ln(\inner{p_0,q}_{\alpha,\beta})$, $\tau \in [0, \epsilon]$. In the last line, we use the fact that as $\psi$ is a valid characterizing function for GAB divergence, $\Psi$ is convex and $\Psi'$ is an increasing function.

\subsection{Proof of Theorem~\ref{thm:pythagorean}}\label{appendix-proof:thm-pythagorean}

Note the chain of equalities,
\begin{align}
        & \Delta(p_\epsilon,p_0,q)\nonumber                                                                                                                                                                                                           \\
    ={} & \dGAB^{(\alpha,\beta),\psi}(p_\epsilon, q) - \dGAB^{(\alpha,\beta),\psi}(p_\epsilon, p_0) - \dGAB^{(\alpha,\beta),\psi}(p_0, q)\nonumber                                                                                                    \\
    ={} & -\tilde{d}_{\psi}^{(\alpha,\beta)}(p_\epsilon,p_\epsilon) + \tilde{d}_{\psi}^{(\alpha,\beta)}(p_\epsilon,q) + \tilde{d}_{\psi}^{(\alpha,\beta)}(p_\epsilon,p_\epsilon) - \tilde{d}_{\psi}^{(\alpha,\beta)}(p_\epsilon,p_0) \nonumber        \\
        & \qquad + \tilde{d}_{\psi}^{(\alpha,\beta)}(p_0,p_0) - \tilde{d}_{\psi}^{(\alpha,\beta)}(p_0,q) \label{eqn:pythagorean-proof-1}                                                                                                              \\
    ={} & \tilde{d}_{\psi}^{(\alpha,\beta)}((1-\epsilon)^{1/\alpha} p_0,q) + O(\epsilon \inner{\delta,q}_{\alpha,\beta}) - \tilde{d}_{\psi}^{(\alpha,\beta)}((1-\epsilon)^{1/\alpha}p_0,p_0) + O(\epsilon \inner{\delta,p_0}_{\alpha,\beta})\nonumber \\
        & \qquad  + \tilde{d}_{\psi}^{(\alpha,\beta)}(p_0,p_0) - \tilde{d}_{\psi}^{(\alpha,\beta)}(p_0,q) \label{eqn:pythagorean-proof-2}                                                                                                             \\
    ={} & O(\epsilon v_\delta) + O(\ln(1-\epsilon)) \label{eqn:pythagorean-proof-3}.
\end{align}
\noindent In step~\eqref{eqn:pythagorean-proof-1}, we have made use of identity~\eqref{eqn:dtilde-to-gabdiv}. Steps~\eqref{eqn:pythagorean-proof-2} and~\eqref{eqn:pythagorean-proof-3} follow from a direct application of Lemma~\ref{lem:dtilde-error}.

\subsection{Proof of Theorem~\ref{thm:maxent-dist}}

Let us first define $Q = (q_1, \dots, q_n) \in \Mfrak_{1, n}$ to be the $\alpha$-escort distribution of $P$. Under this reparametrization, we have $q_i \propto p_i^\alpha$, or $p_i \propto q_i^{1/\alpha}$. The constraints given in~\eqref{eqn:maxent-constraints} now translate to
\begin{equation*}
    \sum_{i=1}^n g_r(a_i) q_i = G_r, \ r = 1, \dots, m.
\end{equation*}
\noindent Now we analyze the three cases separately.

\noindent\textbf{Case $\beta(\alpha+\beta) \neq 0$:} In this case, we can form the optimization problem with the Lagrangian parameters $\lambda_1, \dots, \lambda_m$ as to maximize
\begin{equation}
    F(Q) = -\dfrac{1}{\beta(\alpha+\beta)}\psi\left( \frac{S_{(\alpha+\beta)/\alpha}}{S_{1/\alpha}^{\alpha+\beta}} \right) + \dfrac{1}{\alpha\beta}\psi\left(\frac{1}{S_{1/\alpha}^\alpha} \right) + \sum_{r=1}^m \lambda_r \left( \sum_{i=1}^n g_r(a_i)q_i - G_r \right),
    \label{eqn:maxent-dist-proof-1}
\end{equation}
\noindent where $S_a = \sum_{i=1}^n q_i^a = \norm{q}_a^a$ for any $a \geq 0$. Let us also define $T_a = \ln(S_a) = a\ln(\norm{q}_a)$. Differentiating both sides of~\eqref{eqn:maxent-dist-proof-1} with respect to $q_i$, we obtain the first order condition
\begin{equation*}
    \sum_{r=1}^m \lambda_r g_r(a_i)
    = \frac{1}{\alpha\beta}\Psi'\left( T_{\frac{\alpha+\beta}{\alpha}} - (\alpha+\beta)T_{1/\alpha} \right)\left[ \dfrac{q_i^{\beta/\alpha}}{S_{(\alpha+\beta)/\alpha}} - \dfrac{q_i^{1/\alpha-1}}{S_{1/\alpha}} \right]
    + \frac{\Psi'\left( -\alpha T_{1/\alpha} \right)}{\alpha\beta} \dfrac{q_i^{1/\alpha-1}}{S_{1/\alpha}}.
\end{equation*}
\noindent Dividing both sides by $q_i^{1/\alpha-1}$, and using Eq.~\eqref{eqn:c1p-defn}, we obtain
\begin{equation*}
    \alpha\beta\sum_{r=1}^m \lambda_r \frac{g_r(a_i)}{q_i^{1/\alpha-1}}
    = \dfrac{\Psi'(c_1(Q))}{S_{(\alpha+\beta)/\alpha}} q_i^{\frac{\alpha+\beta-1}{\alpha}} + \frac{\Psi'(c_2(Q)) - \Psi'(c_1(Q))}{S_{1/\alpha}}.
\end{equation*}
\noindent In other words,
\begin{equation*}
    q_i \propto \left[ \frac{\Psi'(c_1(Q)) - \Psi'(c_2(Q))}{e^{-c_2(Q)/\alpha}} + \alpha\beta \sum_{r=1}^m \lambda_r \frac{g_r(a_i)}{q_i^{1/\alpha-1}} \right]^{\frac{\alpha}{\alpha+\beta-1}}.
\end{equation*}
\noindent Note that, here we have used the fact that $\Psi$ has to be strictly increasing for the GAB divergence to be a valid statistical divergence measure (see Theorem~\ref{thm:gab-div-nec-suff}) and hence $\Psi'(c_1(Q)) > 0$.

\noindent\textbf{Case $\beta = 0, (\alpha+\beta)\neq 0$:} In this scenario, the objective function with Lagrangian parameters $\lambda_1, \dots, \lambda_m$ turns out to be
\begin{equation*}
    F(Q) = \dfrac{\psi\left( S_{1/\alpha}^{-\alpha} \right)}{\alpha^2} - \dfrac{\psi'\left( S_{1/\alpha}^{-\alpha} \right)}{\alpha} \sum_{i=1}^n \dfrac{q_i}{S_{1/\alpha}^\alpha}\ln\left( \dfrac{q_i^{1/\alpha}}{S_{1/\alpha}} \right) + \sum_{r=1}^m \lambda_r \left( \sum_{i=1}^n g_r(a_i)q_i - G_r \right),
\end{equation*}
\noindent from which, we derive the first-order conditions as
\begin{equation*}
    \sum_{r=1}^m \lambda_r g_r(a_i)
    =  \dfrac{\Psi'(c_2(Q))}{\alpha^2}(1+\ln(q_i)) + \dfrac{\Psi''(c_2(Q))}{\alpha^2 e^{-c_2(Q)/\alpha}} \left[ \sum_{i=1}^n q_i\ln(q_i) + c_2(Q) \right]q_i^{\frac{1}{\alpha}-1}.
\end{equation*}
\noindent Applying our previous characterization results, we have $\Psi'(c_2(Q)) > 0$. However, due to the convexity of $\Psi$, we only know that $\Psi''(c_2(Q)) \geq 0$ without the strict inequality. In fact, when considering the logarithmic Alpha-Beta divergence, we have $\Psi''(\cdot) \equiv 0$. Therefore, through an algebraic rearrangement of the above quantity, we get
\begin{equation*}
    q_i \propto \exp\left[ \dfrac{\alpha^2}{\Psi'(c_2(Q))} \left\{ \sum_{r=1}^m \lambda_r g_r(a_i) + \frac{\Psi''(c_2(Q))}{e^{-c_2(Q)/\alpha}} \left( \sum_{i=1}^n q_i\ln(q_i) + c_2(Q) \right) q_i^{\frac{1}{\alpha}-1} \right\} \right].
\end{equation*}

\noindent\textbf{Case $\beta \neq 0, (\alpha+\beta) = 0$:} In this case, the objective function becomes
\begin{equation*}
    F(Q) = \dfrac{\psi'(1)}{\alpha}\sum_{i=1}^n \ln\left( \dfrac{q_i^{1/\alpha}}{S_{1/\alpha}} \right) - \dfrac{\psi\left( S_{1/\alpha}^{-\alpha} \right)}{\alpha} + \sum_{r=1}^m \lambda_r \left[ \sum_{i=1}^n g_r(a_i)q_i - G_r \right],
\end{equation*}
\noindent where $\lambda_1, \dots, \lambda_r$ are Lagrangian parameters. As in the previous derivations, by differentiating the objective function with respect to $q_i$, we obtain the first-order condition as
\begin{equation*}
    \sum_{r=1}^m \lambda_r g_r(a_i)
    = \dfrac{\Psi'(0)}{\alpha^2} \left[ \dfrac{nq_i^{1/\alpha-1}}{S_{1/\alpha}} - \dfrac{1}{q_i} \right] - \dfrac{\Psi'(c_2(Q))}{\alpha S_{1/\alpha}}q_i^{1/\alpha-1}.
\end{equation*}
\noindent By rearranging the above, noting that $\Psi'(0) > 0$ and rewriting $\lambda_r \mapsto -\alpha^2\lambda_r$, we obtain that
\begin{equation*}
    q_i \propto \left[ \sum_{r=1}^m \lambda_r g_r(a_i) + \dfrac{q_i^{1/\alpha-1}}{e^{-c_2(Q)/\alpha}} \left( n\Psi'(0) - \alpha\Psi'(c_2(Q)) \right) \right]^{-1},
\end{equation*}
\noindent as we wanted to show.

\end{document}